\newtheorem{theorem}{Theorem}[section]
\newtheorem{lemma}[theorem]{Lemma}
\newtheorem{observation}[theorem]{Observation}
\theoremstyle{definition}
\newtheorem{definition}[theorem]{Definition}
\begin{document}

\textwidth4.5true in
\textheight7.2true in

\begin{title}
{\Large \bf {A note on nearly Platonic graphs\\ with connectivity one}}
\end{title}
\date{}
\begin{author}
{
	\sc{Dalibor Froncek}\\
       	Department of Mathematics and Statistics\\
  		University of Minnesota Duluth\\
		Duluth, Minnesota 55812\\
		U.S.A.\\
		{\texttt{dalibor@d.umn.edu}} \\ \\
	\sc{Mahdi Reza Khorsandi}\\
		Faculty of Mathematical Sciences\\ 
		Shahrood University of Technology\\
		P.O. Box 36199-95161, Shahrood\\ 
		Iran\\ 
		{\texttt{khorsandi@shahroodut.ac.ir}}\\ \\
	\sc{Seyed Reza Musawi}\\
		Faculty of Mathematical Sciences\\ 
		Shahrood University of Technology\\
		P.O. Box 36199-95161, Shahrood\\ 
		Iran\\ 
		{\texttt{r\_musawi@shahroodut.ac.ir}}\\ \\
	\sc{Jiangyi Qiu}\\
			Department of Mathematics and Statistics\\
			University of Minnesota Duluth\\
			Duluth, Minnesota 55812\\
			U.S.A.
			{\texttt{qiuxx284@d.umn.edu}} \\
}
\end{author}

\maketitle

\begin{abstract}
  A $k$-regular planar graph $G$ is nearly Platonic when all faces but one are of the same degree while the remaining face is of a different degree. We show that no such graphs with connectivity one can exist. This complements a recent result by Keith, Froncek, and Kreher on non-existence of $2$-connected nearly Platonic graphs.
\end{abstract}

\section{Introduction}

A \emph{Platonic graph of type $(k,d)$}  is a $k$-vertex regular and $d$-face regular planar graph. It is well known that there exist exactly five Platonic graphs, which can be viewed as skeletons of the five Platonic solids---tetrahedron, cube,  dodecahedron, octahedron, and icosahedron, of types $(3,3),(3,4),(3,5),(4,3)$ and $(5,3)$, respectively.

There are several classes of vertex-regular planar graphs with all but two faces of one degree and two faces of another degree. Hence, it is an intriguing question whether there  exist  vertex-regular planar graphs with exactly one exceptional face? This question was answered in the negative by Deza, Dutour Sikiri\v{c}, and Shtogrin~\cite{DezSikShtog} with a sketch of a proof, and for 2-connected graphs proved in detail by Keith, Froncek, and Kreher~\cite{KFK1}.

\begin{theorem}[\cite{DezSikShtog},~\cite{KFK1}]\label{thm:no-NPG}
  There is no finite, planar, {{$2$-connected}}, regular graph that has all but one face of one degree and a single face of a different degree.
\end{theorem}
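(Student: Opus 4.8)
The plan is to combine Euler's formula with double counting to pin down the very few possible parameter sets, and then to eliminate each of them by a local structural analysis around the exceptional face.

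\smallskip\noindent\emph{Global count.}
Since $G$ is $2$-connected and (as usual) simple, every face is bounded by a cycle, so the degree of a face is the length of its boundary cycle and every face has degree at least $3$; thus $d\ge 3$ and the exceptional degree $e$ satisfies $e\ge 3$. Also $k\ge 3$, since a connected $k$-regular graph with $k\le 2$ is a single edge or a single cycle and has no two faces of different degrees. Let $V,E,F$ count vertices, edges, faces. Handshaking gives $kV=2E$, and summing face degrees gives $(F-1)d+e=2E$. Substituting $V=2E/k$ and $F=(2E-e+d)/d$ into $V-E+F=2$ and clearing denominators yields
\[
  E\,(2k+2d-kd)=k(d+e).
\]
As $k,d,e>0$, the left-hand factor is positive, i.e.\ $\tfrac1k+\tfrac1d>\tfrac12$, which forces $(k,d)$ to be one of $(3,3),(3,4),(4,3),(3,5),(5,3)$, the five Platonic pairs. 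For each such pair the displayed identity expresses $E$, hence $V=2E/k$ and $F$, as an explicit affine function of the single remaining parameter $e$; requiring $V,E,F$ to be integers then confines $e$ to a fixed residue class (for instance $3\mid e$ when $(k,d)=(3,3)$, and $e$ even when $(k,d)=(3,4)$). One further quantity is decisive: the number of vertices not on the exceptional face equals $2E/k-e=\tfrac{2(d+e)}{2k+2d-kd}-e$, which works out to $\tfrac{6-e}{3},\ 4,\ 3,\ e+10,\ e+6$ for the five pairs in that order.

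\smallskip\noindent\emph{The small cases.}
For $(k,d)=(3,3)$ the count forces $e=6$ with \emph{no} vertices off the exceptional face; hence the hexagonal face is bounded by a Hamilton cycle and the remaining three edges form a perfect matching of chords that triangulates the hexagon. But every triangulation of an $n$-gon with $n\ge 4$ has an ``ear'' --- a triangle using two consecutive boundary edges --- whose apex vertex meets no chord and so has degree $2$, contradicting $3$-regularity. (Equivalently: the only cubic graphs on $6$ vertices are $K_{3,3}$, which is nonplanar, and the triangular prism, whose unique planar embedding has face degrees $3,3,4,4,4$.) For $(k,d)\in\{(3,4),(4,3)\}$ only $4$, respectively $3$, vertices lie off the exceptional face, so those vertices are incident to only a bounded number of edges; since each of the $e$ vertices of the exceptional cycle must send $k-2$ edges inward, most of these inward edges are chords of the exceptional cycle. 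Analyzing the non-crossing (hence nested) chord structure together with the constraint that every bounded face be a quadrilateral (resp.\ a triangle) shows the nesting can be only a few layers deep, bounding $e$ to a short explicit list; each surviving candidate $(V,E,F)$ is then ruled out directly.

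\smallskip\noindent\emph{The crux: $(k,d)\in\{(3,5),(5,3)\}$.}
Here the global count places no bound on $e$ at all; these are, in essence, the question of a cubic planar graph all of whose faces are pentagons except one (a ``fullerene with a single non-pentagonal face'') and the dual, icosahedral, problem. The plan is to peel the disk bounded by the exceptional cycle $C_0=C$ into concentric rings: deleting $C_0$ and the ring of $d$-gons incident to it leaves a smaller $2$-connected disk whose outer boundary $C_1$ is again a cycle, and iterating produces cycles $C_0,C_1,C_2,\dots$ filling the disk. One then sets up a recursion for $|C_{j+1}|$ in terms of $|C_j|$ and the numbers of vertices of $C_j$ having each possible count of edges directed inward --- data tightly constrained by $k$-regularity and by the fact that every bounded face is a $d$-gon --- and argues by a monovariant (or by a discharging argument carried on the rings) that the peeling can never terminate in a single inner face of an admissible degree. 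Making this ring bookkeeping precise and uniform in $e$ is the main obstacle; it is exactly the step sketched in \cite{DezSikShtog} and worked out in detail in \cite{KFK1}.
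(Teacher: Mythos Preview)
The paper does not prove this theorem; it is quoted from \cite{DezSikShtog} and \cite{KFK1} and then invoked as an established tool in Lemmas~\ref{lem:no-chord-3}, \ref{lem:no-chord-4}, \ref{lem:no-chord-5}, \ref{lem:no-4-endblock}, and \ref{lem:no-5-endblock}. There is therefore no proof here to compare your argument against.

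Judged on its own, your write-up is a plan rather than a proof. The Euler/handshaking reduction to the five Platonic pairs is correct and standard, and the $(3,3)$ case is cleanly dispatched. For $(3,4)$ and $(4,3)$, however, you only describe what one would do (``analyzing the non-crossing chord structure \dots\ each surviving candidate is then ruled out directly'') without carrying it out. And for $(3,5)$ and $(5,3)$---where the real difficulty lies, since the global count places no upper bound on $e$---you say outright that making the ring bookkeeping precise ``is exactly the step \dots\ worked out in detail in \cite{KFK1}.'' Citing the reference for the hard step is not a proof of the theorem; it is a restatement of the fact that the theorem is in \cite{KFK1}.
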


We complement the result by offering a detailed case-by-case analysis for the remaining case with connectivity one. The main idea of our proof is the following.
 If such a graph with connectivity one exists, then there must exist an endblock, that is, a 2-connected graph with all vertices but one of degree $k$, one vertex of degree $1<l<k$, all faces but one of degree $d_1$ and one face of degree $d\neq d_1$. The non-existence of such graphs was  claimed by Deza and Dutour Sikiri\v{c} in~\cite{DezSik}. Because we were not satisfied with the proof, a purely combinatorial alternative is presented in this paper.

Our goal is to present an alternative proof of the following:

\begin{theorem}[\cite{DezSik}]\label{thm:main-conn=1}
  There is no finite, planar, regular graph with connectivity one that has all but one face of one degree and a single face of a different degree.
\end{theorem}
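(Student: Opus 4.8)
\noindent The plan is to carry out the endblock reduction indicated above and then dispose of the finitely many possibilities for the pair $(k,d_1)$. Assume for contradiction that $G$ is such a graph, with exceptional face of degree $d$ and all other faces of degree $d_1$; fix a plane embedding and a cut vertex. The block--cut tree of $G$ has at least two leaves, each of which is an endblock. Since $G$ is a connected regular graph with a cut vertex we have $k\ge 3$, so an endblock cannot be a single edge; hence each endblock $B$ is $2$-connected on at least three vertices, its cut vertex $v$ has degree $l:=\deg_B(v)$ with $2\le l\le k-1$, and every other vertex of $B$ has degree $k$. In the embedding every face of $B$ is a face of $G$ except the single face $f_0$ into which the rest of $G$ is drawn; and since each inner face of an endblock is bounded by a cycle lying in that block alone, the exceptional face of $G$ lies strictly inside at most one endblock. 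So I may choose an endblock $B$ all of whose faces other than $f_0$ have degree $d_1$. Thus $B$ is a $2$-connected simple plane graph, all of whose vertices have degree $k\ge 3$ except $v$ of degree $l\in\{2,\dots,k-1\}$, and all of whose faces have degree $d_1\ge 3$ except $f_0$, of some degree $d'\ge 3$ (possibly $d'=d_1$). It suffices to show no such $B$ exists.

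Write $n,e,f$ for the vertex, edge and face counts of $B$. Counting degrees gives $2e=k(n-1)+l$ and $2e=d_1(f-1)+d'$, and substituting into $n-e+f=2$ yields
\[
  e\,(2d_1+2k-kd_1)=d_1 l+k d'.
\]
Since the right-hand side is positive, $\tfrac1k+\tfrac1{d_1}>\tfrac12$, which with $k,d_1\ge 3$ forces $(k,d_1)$ to be one of the five Platonic pairs $(3,3),(3,4),(4,3),(3,5),(5,3)$; for each, the displayed identity together with the integrality of $n=(2e-l+k)/k$ and $f=(2e-d'+d_1)/d_1$ and the bounds $2\le l\le k-1$ turns $e,n,f$ into explicit affine functions of the single unknown $d'$.

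It then remains to treat the five pairs. For $(3,3)$, $2$-connectivity makes $f_0$ a $d'$-cycle, so $d'\le n$, leaving only $(l,d')=(2,5)$; but one pentagonal and three triangular faces means a pentagon triangulated by two chords, and any two non-crossing diagonals of a pentagon share a vertex, which then has degree $4$ --- a contradiction. For $(3,4)$ and $(4,3)$ the formulas give $n-d'=3$ and $n-d'=2$, so only a bounded number of vertices lie off $f_0$; since each of them has degree at most $4$, a short analysis of the (necessarily cyclic) local face structure around them leaves only finitely many realizable attachments to the $d'$-cycle, each excluded by a parity clash in the edge count or by a $K_5$- or $K_{3,3}$-minor violating planarity. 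The genuinely hard pairs are $(3,5)$ and $(5,3)$: here $d'$ is unbounded and, although the off-$f_0$ vertices still have bounded degree, there are now linearly many of them, so the counting above collapses. For $(5,3)$ I would place $f_0$ on the outside, making $B$ a near-triangulation with boundary cycle of length $d'$ all of whose vertices have degree $5$ except possibly $v$, and run a discharging argument: give each vertex charge $\deg-6$ and each face charge $2\deg-6$ (total $-12$), so that the only positive charge sits on $f_0$; pushing it outward through the triangulated disk, a bounded shell around $f_0$ absorbs all of it, leaving the other vertices of charge $-1$ unpaid once $d'$ is large, and the few remaining small values of $d'$ are dispatched by hand (equivalently: the interior of such a disk must locally resemble the icosahedron, which cannot close up consistently around a long boundary cycle carrying a single degree defect). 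The pair $(3,5)$ is treated by the analogous structural argument on its cubic near-pentagonal map. Ruling out these triangulation-flavoured near-configurations is the step where I expect essentially all of the work to lie; everything preceding it is bookkeeping with Euler's formula.
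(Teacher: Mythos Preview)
Your reduction to endblocks and the Euler-formula elimination of all but the five Platonic pairs $(k,d_1)$ is exactly what the paper does, and your treatment of $(3,3)$ is fine. But the remaining four cases are where the content of the theorem actually lies, and here your proposal is not a proof.

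For $(3,4)$ and $(4,3)$ you assert that the bounded number of off-boundary vertices leaves ``only finitely many realizable attachments, each excluded by a parity clash in the edge count or by a $K_5$- or $K_{3,3}$-minor''. Neither mechanism is what actually kills these cases: the edge counts are consistent for all $d'$, and no forbidden minor arises --- the obstructions are combinatorial constraints on how triangular or quadrilateral faces can fit around a long outer cycle with one low-degree vertex. The paper handles $(3,4)$ by a saturated-path argument forcing specific edges until a face of the wrong length appears, and handles $(4,3)$ by passing to the dual and splitting the high-degree dual vertex $f$ into pieces of degree at most $3$, which produces either a $2$-connected nearly Platonic $(3,4)$-graph (excluded by Theorem~\ref{thm:no-NPG}) or a $(3,4,d')$-endblock (excluded by the $(3,4)$ case just done).

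For $(3,5)$ and $(5,3)$ your discharging sketch does not yield a contradiction as written. With charges $\deg(v)-6$ on vertices and $2\deg(F)-6$ on faces, the total is $-12$ \emph{identically}; the positive charge $2d'-6$ on $f_0$ is exactly cancelled by the $d'$ boundary vertices and the $t\approx d'+l+1$ interior vertices, so nothing is ``unpaid once $d'$ is large''. A discharging proof would need explicit local rules and a reducible-configuration analysis that you have not supplied. The paper's route is entirely different: $(3,5)$ is done by a fairly involved saturated-path argument that iteratively forces an inner ring of vertices $y_i$, then $z_i$, then $w_i$, until a non-pentagonal face is unavoidable; $(5,3)$ is then reduced, via the dual-plus-splitting trick, back to the $(3,5)$ case and to Theorem~\ref{thm:no-NPG}. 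That duality reduction is the key idea you are missing for the triangulated pairs.
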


The main idea is to look at the blocks of such a potential graph and show that no endblock with required properties can exist.


\section{Basic notions and observations}

We start with a formal definition of an endblock.

\begin{definition}\label{def:block}
  A \emph{$(k,d_1,d)$-endblock $B(k,l)$} is a $2$-connected planar graph on $n$ vertices with $n-1$ vertices of degree $k$, one \emph{exceptional vertex} $x_1$ with $\deg(x_1)=l$ and $1< l<k$, all faces but one of \emph{{common} degree} $d_1$, and the remaining  face of degree $d\neq d_1$, where the exceptional vertex $x_1$ belongs to the face of degree $d$.
\end{definition}

We will often use in our arguments the notion of saturated paths.

\begin{definition}\label{def:sat-path}
  Let $G$ be a $2$-connected planar graph with maximum vertex degree $k$,  {common} face degree $d_1$ and outerface $x_1,x_2,\dots,x_{d}$ of degree $d\neq d_1$. 
   A vertex $u\neq x_1$ is  {\emph{$k$-saturated}} (or simply {\emph{saturated}}) if $\deg(u)=k$ and  {\emph{$k$-unsaturated}} (or simply {\emph{unsaturated}}) if $\deg(u)<k$. Similarly, for a given integer $2\le l<k$,  vertex $x_1$ is  {\emph{$l$-saturated}} (or simply {\emph{saturated}}) if $\deg(x_1)=l$ and  {\emph{$l$-unsaturated}} (or simply {\emph{unsaturated}}) if $\deg(x_1)<l$. 	\\
    Let a path $P=u_1,u_2,\dots,u_{d_1}$ be an induced subgraph of $G$ such that the graph $G+u_1u_{d_1}$ is still planar and the cycle $C=u_1,u_2,\dots,u_{d_1}$ is a boundary of a face of degree $d_1$.  If all vertices $u_i$ for $i=2,3,\dots, d_1-1$ are saturated and both $u_1$ and $u_{d_1}$ are unsaturated, then $P$  is called a {\emph{weakly-$k$-saturated $d_1$-path}}. If at most one of $u_1$ or $u_{d_1}$ is unsaturated while all other vertices are saturated, then $P$  is called a {\emph{strongly-$k$-saturated $d_1$-path}}. When $k$ and $d_1$ are fixed, we call these paths simply {\emph{weakly saturated}} or {\emph{strongly saturated}}, respectively.
\end{definition}
  
The following assertions are easy to verify.

\begin{observation}\label{obs:strong}
  Let $G$ be a $2$-connected planar graph with maximum vertex degree $k$,  minimum face degree $d_1$ and outerface $x_1,x_2,\dots,x_{d}$ of degree $d\neq d_1$. 	 
  If a strongly-$k$-saturated $d_1$-path $P=u_1,u_2,\dots,u_{d_1}$ is on a boundary of an  inner  face of $G$, then $G$ cannot be completed into a  $(k,d_1,d)$-endblock.
\end{observation}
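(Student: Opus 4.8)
The plan is to argue by contradiction. Suppose $G$ can be completed to a $(k,d_1,d)$-endblock $B$: thus $G$ is a subgraph of $B$ carrying the induced embedding, the exceptional vertex $x_1$ is shared, the outer face of $G$ is the exceptional $d$-face of $B$, and every inner face of $G$ lies in the non-exceptional part of $B$. Let $F$ be the inner face of $G$ whose boundary contains the path $P=u_1,u_2,\dots,u_{d_1}$.

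The first step I would carry out is to use the interior vertices $u_2,\dots,u_{d_1-1}$ of $P$ to freeze the local picture. Each such $u_i$ is $k$-saturated, i.e.\ $\deg_G(u_i)=k$; since $\Delta(B)=k$ and $G\subseteq B$, this forces $N_B(u_i)=N_G(u_i)$ together with an unchanged rotation at $u_i$. Hence the two edges $u_{i-1}u_i$ and $u_iu_{i+1}$ that bound $F$ at $u_i$ in $G$ also bound a common face of $B$ at $u_i$, and reading off the face on that side of the edge $u_iu_{i+1}$ shows this face does not depend on $i$ as $i$ ranges over $2,\dots,d_1-1$. Writing $F'$ for it, I would conclude that $F'\subseteq F$ and that the boundary walk of $F'$ runs through $u_1,u_2,\dots,u_{d_1}$ consecutively, so $\deg_B(F')\ge d_1$.

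Next I would note that, because $F'\subseteq F$ and $F$ is an inner face of $G$, the face $F'$ lies in the non-exceptional part of $B$; hence $F'$ is not the $d$-face and so $\deg_B(F')=d_1$. As $B$ is $2$-connected, $\partial_B F'$ is a cycle, and a $d_1$-cycle containing the $d_1$ distinct vertices $u_1,\dots,u_{d_1}$ consecutively can only be $u_1u_2\cdots u_{d_1}u_1$; in particular $u_1u_{d_1}\in E(B)$. On the other hand $P$ is an induced path of $G$ and $d_1\ge3$, so $u_1u_{d_1}\notin E(G)$. Thus $u_1u_{d_1}$ is an edge of $B$ that is not in $G$, whence both $u_1$ and $u_{d_1}$ have strictly larger degree in $B$ than in $G$. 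Since $P$ is strongly saturated, at least one of $u_1,u_{d_1}$---say $u_1$---satisfies $\deg_G(u_1)=k$, and then $\deg_B(u_1)\ge k+1$, contradicting $\Delta(B)=k$. This contradiction would establish the observation.

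I expect the main obstacle to be the bookkeeping in the first step, namely checking carefully that preserving the rotations at the saturated interior vertices really does force the whole segment $u_1,\dots,u_{d_1}$ to survive on the boundary of a \emph{single} face $F'$ of $B$ rather than being split among several faces of $B$; this is exactly the place where the saturation of the \emph{interior} vertices of $P$, not of its endpoints, is used. It is also worth stressing why the hypothesis that $F$ is an inner face is indispensable precisely at the step $\deg_B(F')=d_1$: had $P$ been on the outer face of $G$, then $F'$ could legitimately be the exceptional $d$-face, its boundary cycle would not be forced to close up through the chord $u_1u_{d_1}$, and the whole argument would collapse---which is also the reason weakly saturated paths on the outer face do not lead to an obstruction.
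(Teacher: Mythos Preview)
Your proof is correct and follows exactly the paper's approach: the saturated interior vertices force $P$ to lie on a single inner $d_1$-face of the completed endblock, whence the closing edge $u_1u_{d_1}$ must be present and over-saturates an endpoint. The paper compresses your careful rotation-preservation argument into a single clause (``the whole path $P$ must be a part of an inner face of degree $d_1$''), but the logic is identical.
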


\begin{proof} If $G$ is a subgraph of a $(k,d_1,d)$-endblock, then the whole path $P$ must be a part of an inner face of degree $d_1$, which implies that the remaining edge of that face must be $u_1u_{d_1}$. However, this edge cannot be added, because at least one of the degrees of $u_1$ and $u_{d_1}$ would then exceed $k$, a contradiction.
\end{proof}

\begin{observation}\label{obs:weak}
  Let $G$ be a $2$-connected planar graph with maximum vertex degree $k$,  minimum face degree $d_1$ and outerface $x_1,x_2,\dots,x_{d}$ of degree $d\neq d_1$.  If a weakly-$k$-saturated $d_1$-path $P=u_1,u_2,\dots,u_{d_1}$ is on a boundary of an  inner  face of $G$, then the edge $u_1u_{d_1}$ must be added in order to complete $G$ into a  $(k,d_1,d)$-endblock.
\end{observation}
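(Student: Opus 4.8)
The plan is to follow the one-line idea behind the proof of Observation~\ref{obs:strong}, but instead of deriving a contradiction we read off the forced edge. Let $H$ be an arbitrary $(k,d_1,d)$-endblock that completes $G$; as is implicit in this setting, we may assume the embedding of $G$ extends to one of $H$ in which all the new vertices and edges lie in bounded faces of $G$, so that the unbounded face $x_1,\dots,x_d$ of $G$ is exactly the exceptional $d$-face of $H$ and every other face of $H$ has degree $d_1$. Let $F$ be the bounded face of $G$ whose boundary contains the path $P=u_1,\dots,u_{d_1}$, and let $\Phi$ be the face of $H$ that is incident to the corner of $u_2$ delimited by the edges $u_1u_2$ and $u_2u_3$ on the side of $F$.

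The first step is to show that the whole path $P$ lies on $\partial\Phi$, and this is exactly where weak saturation is used. Each interior vertex $u_i$ with $2\le i\le d_1-1$ is $k$-saturated (or $l$-saturated, if it happens to be $x_1$), hence carries no new edge in $H$; so the rotation at $u_i$ is the same in $H$ as in $G$, and the corner of $u_i$ between $u_{i-1}u_i$ and $u_iu_{i+1}$ that bordered $F$ in $G$ now borders a single face of $H$. Passing from the corner at $u_i$ to the corner at $u_{i+1}$ across the common edge $u_iu_{i+1}$ shows that all of these corners are incident to the same face, namely $\Phi$; therefore $\partial\Phi$ contains the $d_1-1$ edges $u_1u_2,\dots,u_{d_1-1}u_{d_1}$, hence all $d_1$ vertices $u_1,\dots,u_{d_1}$, and in particular $\deg_H(\Phi)\ge d_1$. (When $d_1=3$ there are no intermediate corners to chase and the single corner at $u_2$ already gives $\{u_1u_2,u_2u_3\}\subseteq\partial\Phi$.)

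The second step pins down $\deg_H(\Phi)$. Since $\Phi$ sits on the $F$-side of the edges of $P$ and no face of $H$ can cross an edge of $G$ (all of which survive in $H$), $\Phi$ is contained in the bounded region $F$; hence $\Phi$ is a bounded face of $H$ and is not the exceptional face, so $\deg_H(\Phi)=d_1$. As $H$ is $2$-connected, $\partial\Phi$ is a cycle of length $d_1$ containing the path $P$ on all $d_1$ of its vertices, and the only such cycle is $u_1,u_2,\dots,u_{d_1},u_1$; thus $u_1u_{d_1}\in E(H)$. Since $P$ is induced in $G$ we have $u_1u_{d_1}\notin E(G)$, so this edge was necessarily added, and as $H$ was arbitrary, $u_1u_{d_1}$ must be added in every completion of $G$ into a $(k,d_1,d)$-endblock.

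I expect the only real obstacle to be the rotation-system bookkeeping of the first step: making fully rigorous that saturated interior vertices force $P$ onto the boundary of a single face of $H$, while handling the small case $d_1=3$ and the nuisance that $\partial F$ need not be a simple cycle. The second step is essentially the same observation already used for Observation~\ref{obs:strong}, now simply read in the positive direction.
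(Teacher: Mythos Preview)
Your proof is correct and follows the same approach as the paper: the whole path $P$ must lie on the boundary of a single inner face of degree $d_1$, forcing the missing edge to be $u_1u_{d_1}$. The paper's proof is a two-line remark referring back to Observation~\ref{obs:strong}, whereas you have spelled out the rotation-system details that make the ``$P$ lies on one face'' step rigorous; the underlying argument is identical.
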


\begin{proof}
  Similarly as above, the whole path $P$ must be a part of an inner face of degree $d_1$, which implies that the remaining edge of that face must be $u_1u_{d_1}$. Hence, we must add it to $G$ to complete it into the required endblock.
\end{proof}

\begin{observation}\label{obs:inside-triangle}
  Let $G$  be a subgraph of a $(k,3,d)$-endblock $B(k,l)$ and $u,v,w$ be a triangle such that $v$ is saturated  and has no neighbors inside the triangle. Then the triangle $u,v,w$ is a face boundary.
\end{observation}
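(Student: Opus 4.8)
The plan is to exploit two facts: in the completed endblock $B(k,l)$ every inner face is a triangle, and a saturated vertex has already attained its final degree, so no new edge incident to it is added when $G$ is completed to $B(k,l)$.

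First I would fix the planar embedding of $B(k,l)$ restricting the one of $G$, and let $D$ be the closed region bounded by the cycle $u,v,w$ that does not contain the face of degree $d$ (the ``inside'' of the triangle). Since $v$ is saturated, $\deg_G(v)$ equals the final degree of $v$ in $B(k,l)$---namely $k$, or $l$ in the exceptional case $v=x_1$---so passing from $G$ to $B(k,l)$ adds no edge at $v$. Together with the hypothesis that $v$ has no neighbor in the interior of $D$ in $G$, this gives that $v$ has no neighbor in the interior of $D$ in $B(k,l)$ either. In particular the two edges $vu$ and $vw$ are consecutive in the rotation at $v$ on the side facing $D$.

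Next, because $D$ is bounded it cannot contain the outer face of degree $d$, so every face of $B(k,l)$ lying inside $D$ is an inner face, hence a triangle. Consider the face $F$ of $B(k,l)$ incident to $v$ and occupying the angular sector at $v$ inside $D$ between $vu$ and $vw$. Since there is no edge at $v$ strictly inside $D$, the two edges of $\partial F$ meeting $v$ are exactly $vu$ and $vw$; as $F$ is a triangle, its vertex set is $\{u,v,w\}$ and its third edge is $uw$, i.e.\ $F$ is the triangle $u,v,w$. Hence $u,v,w$ bounds a face of $B(k,l)$; since the interior of $D$ is then empty, it also bounds a face of $G$.

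The only delicate point is the planarity bookkeeping in the middle step: one must be sure that ``$v$ has no neighbor inside the triangle'' really forbids every edge of $B(k,l)$ leaving $v$ into the interior of $D$, so that a single triangular face $F$ fills the whole inner sector at $v$ and is bounded there precisely by $vu$ and $vw$. The case $v=x_1$ needs only a one-line remark, since $F$ remains an inner face and hence a triangle regardless of the degree of $v$.
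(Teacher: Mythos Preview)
Your proof is correct and follows essentially the same idea as the paper's: both arguments hinge on the observation that, since $v$ is saturated and has no neighbor in the interior of the triangle, the inner face of $B(k,l)$ occupying the sector at $v$ between $vu$ and $vw$ must be the triangle $u,v,w$ itself. The paper phrases this via Observation~\ref{obs:weak} and then rules out an interior neighbor of $u$ by a short contradiction, whereas you argue directly from the rotation at $v$; the substance is the same.
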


\begin{proof} 
By Observation \ref{obs:weak}, the path $u,v,w$ must be a part of a triangular face. Suppose that $u$ has neighbors inside the triangle. Then at least one of them, say $u_1$, must be on the boundary containing edges $u_1u$ and $uv$. Since $v$ has no neighbors inside the triangle, the boundary also contains the edge $vw$. But then the edges $u_1u,uv,vw$ bound a face that is longer than a triangle, which is impossible.
\end{proof}

Now we start eliminating certain forbidden  configurations. In a $(k,d_1,d)$-endblock $B(k,l)$ with $x_1,x_2,\dots,x_{d}$ as the boundary of the exceptional face, by a \emph{chord} we mean an edge $x_ix_j$ not on the boundary of the exceptional face. That is, if $i<j$ and $x_ix_j$ is a chord, then $j-i\ne 1, d-1$.

\begin{lemma}\label{lem:no-chord-3}
  A  $(3,d_1,d)$-endblock $B(3,2)$ for $d_1=4,5$ does not have a chord.
\end{lemma}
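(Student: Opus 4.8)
The plan is to assume, for contradiction, that $B=B(3,2)$ has a chord and to extract a contradiction from one carefully chosen chord. First observe that $x_1$, having degree $2$, has both of its edges on the exceptional face boundary, so $x_1$ is not an endpoint of any chord; hence every chord is of the form $x_ix_j$ with $2\le i<j\le d$. Embed $B$ with the exceptional face as the outer face, and among all chords choose one, $x_ix_j$, for which the length $m:=j-i$ of the arc $x_i,x_{i+1},\dots,x_j$ is minimum. The chord together with this arc bounds a region $R_1$ whose boundary cycle $C_1=x_i,x_{i+1},\dots,x_j,x_i$ does not contain $x_1$; let $G_1$ be the subgraph consisting of $C_1$ together with everything strictly inside $R_1$.

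Next I would record the structure forced on $G_1$. By planarity any edge of $B$ joining two vertices of the arc must lie inside $R_1$, and by minimality of $m$ it cannot be a chord of $C$; hence the arc is an induced path, and since each of $x_{i+1},\dots,x_{j-1}$ has degree $3$ with two edges on the arc, each of them has exactly one neighbour strictly inside $R_1$ — in particular $\mathrm{int}(R_1)\ne\emptyset$. Consequently $x_i$ and $x_j$ have degree $2$ in $G_1$, every vertex strictly inside $R_1$ has degree $3$, every face inside $R_1$ has degree $d_1$, and $G_1$ is $2$-connected (a cut vertex of $G_1$ would be a cut vertex of $B$, since $C_1$ minus a vertex is connected and $\mathrm{int}(R_1)$ attaches to $B$ only through $C_1$). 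If $m=2$ then $x_{i+1}$ is the only vertex of $C_1$ with a neighbour inside $R_1$, hence a cut vertex of $B$, a contradiction; so $m\ge 3$.

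Now consider the inner face $F$ of $G_1$ incident with the chord $x_ix_j$. Since $\deg_{G_1}x_i=\deg_{G_1}x_j=2$, the boundary of $F$ contains the path $x_{i+1},x_i,x_j,x_{j-1}$, so (cf.\ Observation~\ref{obs:weak}) $F$ is a $d_1$-cycle $x_i,x_{i+1},\dots,x_{j-1},x_j,x_i$. If $d_1=4$ this forces the edge $x_{i+1}x_{j-1}$: when $m=3$ it is an arc edge, so $F=C_1$, contradicting $\mathrm{int}(R_1)\ne\emptyset$; when $m\ge 4$ it is a chord of $C$ lying inside $R_1$, contradicting minimality. This disposes of $d_1=4$ (and is short precisely because Euler's formula forces $G_1$ to have only two interior vertices).

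When $d_1=5$, $F$ has the form $x_i,x_{i+1},z,x_{j-1},x_j,x_i$ for a single vertex $z$ adjacent to $x_{i+1}$ and $x_{j-1}$. If $z$ lies on $C$ it must be $x_{i+2}$ or $x_{j-2}$, and then, exactly as above, one gets $F=C_1$ or a shorter chord, a contradiction. If $z$ is interior it has degree $3$ with a third neighbour $z'$; if $z'\in\{x_{i+2},x_{j-2}\}$ the triangle $x_{i+1},z,x_{i+2}$ (resp.\ $x_{j-1},z,x_{j-2}$) has no vertex with an edge into either of its sides, so one side is a triangular inner face, contradicting $d_1=5$. In the remaining case ($z$ interior, $z'\notin C\cup\{x_{i+2},x_{j-2}\}$, which forces $m\ge 4$) I would peel $F$ off: the subgraph on the cycle $x_{i+1},x_{i+2},\dots,x_{j-1},z,x_{i+1}$ together with its interior is again $2$-connected, has all inner faces of degree $5$, has exactly two degree-$2$ vertices ($x_{i+1}$ and $x_{j-1}$, now non-adjacent), all other vertices of degree $3$, and strictly fewer vertices. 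One then finishes by an auxiliary claim proved by induction on the number of vertices: no finite $2$-connected plane graph with cyclic outer face can have all inner faces of degree $d_1\in\{4,5\}$, exactly two degree-$2$ vertices (both on the outer cycle), and all other vertices of degree $3$; the inductive step peels off a $d_1$-face incident with a degree-$2$ vertex. The main obstacle I anticipate is this $d_1=5$ descent: one must control the degenerate coincidences (the new vertex landing on the current outer cycle, the two degree-$2$ vertices becoming adjacent or colliding, a chord of the outer cycle appearing), treat the non-adjacent case of the auxiliary claim, and re-verify $2$-connectivity after each reduction.
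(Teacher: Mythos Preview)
Your plan diverges from the paper's argument, and while the $d_1=4$ branch is essentially complete, the $d_1=5$ branch has a genuine gap.

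The paper's proof is short and uniform in $d_1$. Having noted (as you do) that any chord satisfies $j-i\ge 3$, it takes the subgraph $H$ induced by all vertices on or inside the cycle $x_i,x_{i+1},\dots,x_j$, makes an isomorphic copy $H'$, and \emph{amalgamates the two copies along the chord edge} $x_ix_j$. Because $x_i$ and $x_j$ each had degree $2$ in $H$, the glued graph is $3$-regular, $2$-connected, with every face of degree $d_1$ except the outer one, which has degree $2(j-i)\ge 6>d_1$. This is a $2$-connected nearly Platonic graph, contradicting Theorem~\ref{thm:no-NPG}. No case distinction on $d_1$, no descent, no auxiliary lemma.

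Your proposal instead attempts a direct structural descent. The minimality device and the analysis of the face $F$ through the chord are fine, and for $d_1=4$ your argument is correct. For $d_1=5$, however, the work is pushed into an ``auxiliary claim'' (no $2$-connected plane graph with all inner faces of degree $5$, exactly two degree-$2$ boundary vertices, all others of degree $3$) which you do not prove; you sketch an induction by face-peeling and then list the very obstacles that make it nontrivial. The difficulty is real: after one peel the two degree-$2$ vertices $x_{i+1},x_{j-1}$ are \emph{non-adjacent}, so the easy doubling trick no longer applies to the reduced graph, and you are left with a statement of comparable strength to what you set out to prove. (There is also a minor omission in the case split --- $z'$ could equal some $x_k$ with $i+3\le k\le j-3$ --- and the parenthetical ``which forces $m\ge 4$'' is not correct for $m=3$.)

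The clean fix is to drop the descent: your $G_1$ already has its two degree-$2$ vertices $x_i,x_j$ adjacent via the chord, so double $G_1$ along that edge and invoke Theorem~\ref{thm:no-NPG}. That is exactly the paper's move, and it eliminates the entire $d_1=5$ case analysis.
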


\begin{proof}
  Let the cycle $x_1,x_2,\dots,x_{d}$ be the boundary of the exceptional face of this graph and there exists a chord $x_i x_j$ and $j>i$. Then $j-i\geq 3$, otherwise $j=i+2$ and $x_ix_{i+1}x_j$ is a triangle such that $x_i$ is saturated and has no neighbor inside the triangle. By Observation \ref{obs:inside-triangle}, this triangle is the boundary of a face, therefore, $x_{i+1}$ is of degree $2$, which is impossible.

  Now, we consider the subgraph $H$ induced by all vertices on and inside the cycle $x_i,x_{i+1},\dots, x_{j-1},x_j$. Create an isomorphic copy $\varphi(H)=H'$ of $H$ by assigning $\varphi(v)=v'$ for every $v\in H$. Then amalgamate the edges $x_ix_j$ and $x'_i x'_j$. The resulting graph is a 2-connected, $3$-regular planar graph with all faces of degree $d_1$ except the outerface, which is of degree $2(j-i)$. We proved above that $j-i\geq 3$, which implies that the outerface is of degree $2(j-i)\geq 6$. Thus we have constructed a 2-connected, 3-regular planar graph with one face of degree greater than $5$ and all remaining faces of degree $d_1\leq 5$. This contradicts Theorem \ref{thm:no-NPG}.
\end{proof}

\begin{lemma}\label{lem:no-chord-4}
  A $(4,3,d)$-endblock $B(4,l)$, with the cycle $x_1,x_2,\dots,x_{d}$ as the boundary of the exceptional face does not have a chord, other than $x_2x_{d}$ when $l=2$.
\end{lemma}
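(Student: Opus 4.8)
The plan is a proof by contradiction: from a chord other than $x_2x_{d}$ I will cut off a small piece of the graph and show it cannot be triangulated. Suppose $B(4,l)$ has such a chord. Since every vertex other than $x_1$ has degree $4$, parity of $\sum_v\deg v=2|E(B)|$ forces $l$ to be even, so $l=2$ (otherwise no such endblock exists at all); hence $x_1$ has exactly the two neighbours $x_2,x_{d}$, no chord is incident with $x_1$, and every chord has both ends in $\{x_2,\dots,x_{d}\}$. For a chord $x_ax_b$ with $2\le a<b\le d$, let $\Omega(x_ax_b)$ be the closed region bounded by the chord together with the arc $x_a,x_{a+1},\dots,x_b$ of the exceptional face, i.e.\ the side not containing $x_1$. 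One checks that $\Omega(x_2x_{d})$ consists of all vertices except $x_1$ and so is the largest of these regions; consequently, if $B(4,2)$ has a chord different from $x_2x_{d}$, we may fix a chord $x_ix_j$ for which $\Omega:=\Omega(x_ix_j)$ has fewest vertices. Then $(i,j)\neq(2,d)$, and by minimality $\Omega$ contains no chord of $B(4,2)$ besides $x_ix_j$ itself, since a shorter chord would cut off a strictly smaller region.

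Write $H$ for the subgraph of $B(4,2)$ induced on $\Omega$ and $m:=j-i\ (\ge 2)$. Then $C=x_i,x_{i+1},\dots,x_j$ is a chordless cycle of length $m+1$ bounding the unique non-triangular face of $H$, all other faces of $H$ being triangles. A short planarity argument shows that from a vertex of $\{x_{i+1},\dots,x_{j-1}\}$ no edge of $B(4,2)$ leaves $\Omega$: an edge to an exceptional-face vertex outside $\{x_i,\dots,x_j\}$ would cross the chord $x_ix_j$, and a direct edge between two vertices of $C$ would be a chord lying inside $\Omega$. Hence each of $x_{i+1},\dots,x_{j-1}$, and every vertex of $H$ strictly inside $C$, keeps degree $4$ in $H$, while $x_i$ and $x_j$ have degree $2$ or $3$ in $H$, each having lost one boundary neighbour (indeed $x_{i-1},x_{j+1}\notin\Omega$, and in particular $x_1\notin\Omega$). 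Feeding this degree data, together with the triangular inner faces and the outer face of degree $m+1$, into Euler's formula for $H$ yields
\[
|V(H)|-|V(C)|=\tfrac12\bigl(\deg_H x_i+\deg_H x_j\bigr)-1\in\{1,2\},
\]
so $H$ has only one or two vertices inside $C$.

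It remains to rule out these two cases, which is a finite check. A degree-$4$ vertex lying strictly inside a near-triangulation has its four neighbours arranged as a $4$-cycle that bounds the four triangles around it; since $C$ is chordless and there is at most one other interior vertex, such a $4$-cycle can essentially only be $C$ itself, which forces $m+1=4$ and then drives one of $x_{i+1},x_{i+2}$ down to degree $3$ — a contradiction. For two interior vertices one splits according to whether they are adjacent: if not, each again needs a $4$-subcycle of $C$; if so, tracking the triangles around the two vertices forces either a further vertex of $C$ down to degree $2$ or $3$, or $x_i$ (or $x_j$) up to degree $4$. Every branch is contradictory, so $B(4,2)$ has no chord other than $x_2x_{d}$, proving the lemma.

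I expect the Euler bookkeeping and the two-interior-vertex subcases — particularly the one in which the interior vertices are joined by an edge — to be the main obstacle, together with making the planarity arguments airtight (no edge escaping $\Omega$, and $x_1\notin\Omega$ for the chosen chord). It is worth noting that the ``double across the chord'' device used for Lemma~\ref{lem:no-chord-3} does not transfer here: doubling a degree-$3$ boundary vertex of $H$ produces a vertex of degree $5$, so the doubled graph is not regular and Theorem~\ref{thm:no-NPG} does not apply, which is why a genuinely local analysis of $H$ seems unavoidable.
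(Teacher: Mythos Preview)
Your overall strategy is sound and, once the case analysis is written out, gives a correct proof. The Euler count $|V(H)|-|V(C)|=\tfrac12(\deg_H x_i+\deg_H x_j)-1$ is right; combined with $\deg_H x_i,\deg_H x_j\in\{2,3\}$ it forces either one interior vertex with $\deg_H x_i=\deg_H x_j=2$, or two interior vertices with $\deg_H x_i=\deg_H x_j=3$. In the one-vertex case the link $4$-cycle would have to sit inside the chordless path $x_{i+1},\dots,x_{j-1}$, which is impossible. In the two-vertex case a quick edge count pins down $m$ ($m=4$ if $y_1\not\sim y_2$, $m=3$ if $y_1\sim y_2$); the non-adjacent subcase dies because a chordless $5$-cycle contains no $4$-cycle, and the adjacent subcase dies because the forced adjacencies leave the edge $x_ix_j$ with no common neighbour, hence no triangular inner face on it. So the ``main obstacle'' you flag is genuinely finite and short.

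This is, however, a different argument from the paper's. The paper never invokes Euler's formula here. It takes a shortest chord $x_ix_j$ and analyses the fourth neighbour $y_i$ of $x_i$ using the weakly saturated $3$-path machinery (Observation~\ref{obs:weak}): if $y_i$ lies outside the cycle $x_i,\dots,x_j$ then $x_j,x_i,x_{i+1}$ forces the shorter chord $x_{i+1}x_j$; if $y_i$ (and symmetrically $y_j$) lies inside, then $x_{i-1},x_i,x_j$ forces the edge $x_{i-1}x_j$, which either overloads $x_j$ (if $j<d$) or yields $i=2$, $j=d$. What the paper's route buys is brevity and reuse of the saturated-path lemmas already in place; what your route buys is self-containment --- no appeal to the saturated-path observations, and a method (Euler bound on the interior plus link cycles) that would transplant to other near-triangulated situations. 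Your closing remark that the doubling trick of Lemma~\ref{lem:no-chord-3} does not carry over is also accurate: amalgamating two copies of $H$ along $x_ix_j$ produces endpoint degrees $2\deg_H x_i-1\in\{3,5\}$, so one cannot land in a $4$-regular graph and invoke Theorem~\ref{thm:no-NPG}.
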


\begin{proof}
  Let the graph {have} some chords and the chord $x_i x_j$ with $j>i$ be the shortest one. It means that there is no other chord $x_sx_t$ with $0<t-s<j-i$.

If $i=1$, then $l=3$. In this case the graph has only one vertex of an odd degree, which is impossible.   

   {Now let $i>1$}
  and $y_i$ be the fourth neighbor of $x_i$. If $y_i$ is on or inside of the cycle $x_1,x_2,\dots,x_i,x_j,x_{j+1}\dots,x_{d}$, then the path $x_j,x_i,x_{i+1}$ is a weakly-4-saturated 3-path and we must have the triangular face $x_j,x_i,x_{i+1}$. If $j-(i+1)=1$, then $\deg(x_{i+1})=2$, which is impossible and so $j-(i+1)\ge2$ and $x_{i+1}x_j$ is a chord shorter than $x_ix_j$, a contradiction.
 Hence, $y_i$ must be inside of the cycle $x_i,x_{i+1},\dots,x_j$. By symmetry, the fourth neighbor $y_j$ of $x_j$ must be inside that cycle as well. But then we see that the path $x_{i-1},x_{i},x_{j}$ is a weakly saturated 3-path and by Observation \ref{obs:weak}, we must have the edge $x_{i-1}x_{j}$. 

If $j<d$, then $x_j$ has neighbors $x_{i-1}, x_i, y_j, x_{j-1}, x_{j+1}$ and is of degree at least 5, a nonsense.
Therefore, $j=d$ and we must have $x_{i-1}=x_1$, which concludes the proof. 
\end{proof}

\begin{lemma}\label{lem:no-chord-5}
  A  $(5,3,d)$-endblock $B(5,l)$, with the cycle $x_1,x_2,\dots,x_{d}$ as the boundary of the exceptional face does not have a chord  other than $x_2x_{d}$ when $l=2$. 
\end{lemma}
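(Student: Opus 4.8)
My plan is to follow the pattern of the proof of Lemma~\ref{lem:no-chord-4}, but there are more local configurations to examine because the vertices now have degree~$5$, and the parity argument that disposed of a chord at $x_1$ in the $4$-regular case is no longer available. First observe that if $l=2$ then the triangle $x_dx_1x_2$ is the only face at $x_1$ apart from the exceptional face, so $x_2x_d$ is an edge; this is the chord allowed by the statement, and we must show no other exists. So suppose $B(5,l)$ has a chord $x_ix_j$, $j>i$, different from $x_2x_d$, chosen so that $j-i$ is minimum, and let $G_1$ be the subgraph spanned by the vertices lying on or inside the cycle $x_i,x_{i+1},\dots,x_j,x_i$, the last edge being the chord. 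Just as in the $4$-regular case, Observations~\ref{obs:weak} and~\ref{obs:inside-triangle} force $\deg_{G_1}(x_s)=5$ for every $i<s<j$ (else a shorter chord or a non-exceptional vertex of degree $2$ appears). Examining where the remaining three neighbours of $x_i$ sit relative to this cycle, and the symmetric picture at $x_j$, one finds $\deg_{G_1}(x_i),\deg_{G_1}(x_j)\in\{3,4\}$: if $x_j$ is the neighbour of $x_i$ adjacent in the rotation to the edge $x_ix_{i+1}$ then the triangle $x_jx_ix_{i+1}$ is forced, giving a shorter chord or a degree-$2$ non-exceptional vertex; and $\deg_{G_1}(x_i)=2$ is excluded by the same observation applied to the triangle $x_ix_{i+1}x_j$.

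Assume first $i>1$, so $x_i$ and $x_j$ both have degree $5$. If $\deg_{G_1}(x_i)=\deg_{G_1}(x_j)=3$ I would copy Lemma~\ref{lem:no-chord-3}: amalgamate $G_1$ with a disjoint isomorphic copy $H'$ along the edges $x_ix_j$ and $x_i'x_j'$. The resulting graph is $2$-connected, $5$-regular and planar, with all faces triangles except the outer face, which has degree $2(j-i)\ge4$, contradicting Theorem~\ref{thm:no-NPG}. If $\deg_{G_1}(x_i)=\deg_{G_1}(x_j)=4$, the triangular faces forced at $x_i$ and at $x_j$ both contain the edge $x_ix_j$, so they coincide; hence $x_{i-1}=x_{j+1}$, and this vertex, sitting on a triangle with $x_i$ and $x_j$ and having no further neighbours, has degree $2$, so it must be $x_1$, forcing $i=2$, $j=d$, $l=2$, the excluded chord. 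The remaining, mixed, case has $\deg_{G_1}(x_i)=4$ and $\deg_{G_1}(x_j)=3$ (up to relabelling). Here the triangle $x_{i-1}x_ix_j$ is a face so $x_{i-1}x_j$ is an edge, and reading the rotation at $x_j$ also gives the edge $x_{i-1}x_{j+1}$; enlarging $G_1$ along these forced edges absorbs the boundary vertices $x_{i-1},x_{j+1},\dots$ one at a time, each absorption turning a former boundary vertex into a degree-$5$ interior vertex while the two distinguished boundary vertices keep degrees in $\{2,3,4\}$. Since the exceptional cycle is finite, this process stops, and when it does one of the earlier contradictions recurs: a vertex forced to degree $>5$, a non-exceptional vertex forced to degree $2$, the exceptional vertex $x_1$ reached under impossible conditions, or a region that has closed up with both distinguished vertices of degree $3$, so that the doubling argument applies.

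Finally, if $i=1$ then $x_1$ is adjacent to $x_2$, $x_d$ and $x_j$, so $l\in\{3,4\}$. The faces around $x_1$ force edges such as $x_2x_j$ (and $x_jx_d$ when $l=3$); each is either a chord strictly shorter than $x_1x_j$, contradicting minimality, or --- when $x_1x_j$ is already shortest --- it yields an explicit small configuration which a short further chain of forced edges, again combined with the doubling/Theorem~\ref{thm:no-NPG} device, rules out.

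I expect the bulk of the work and the main obstacle to be exactly the mixed-degree and $i=1$ cases: these are not settled by the minimality of the chord alone, and one must keep track of which forced edges are genuine chords rather than exceptional-face edges, allow for the possibility of two chords at one vertex, and argue that the absorption process terminates --- the part of the argument that does not simply carry over from the $4$-regular proof.
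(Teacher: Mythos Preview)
Your framework matches the paper's: take a shortest chord $x_ix_j$, split into cases according to how many of the two ``extra'' neighbours of $x_i$ and of $x_j$ lie inside the cycle $C=x_i,\dots,x_j$, and use the doubling trick against Theorem~\ref{thm:no-NPG}. Your $(3,3)$ and $(4,4)$ subcases are essentially the paper's arguments and are fine.

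The gap is in the mixed subcase. After you force $x_{i-1}x_j$ and then $x_{i-1}x_{j+1}$, you pass to an informal ``absorption process'' and assert that it must terminate in one of the earlier contradictions. But at the new chord $x_{i-1}x_{j+1}$ the degree pair $(\deg_{G_1'}x_{i-1},\deg_{G_1'}x_{j+1})$ can be $(3,2)$ or $(4,2)$ as well as $(3,3),(4,4),(3,4),(4,3)$: the argument that ruled out degree~$2$ at an endpoint used minimality of $j-i$, and $x_{i-1}x_{j+1}$ is \emph{longer}, so that argument no longer applies. Worse, if the process keeps landing in the mixed or $(4,4)$ pattern it can wrap all the way around to $x_1$; you then learn that $l=2$ and that $x_2x_d$ is an edge, but you have \emph{not} produced a contradiction---you have only exhibited a $B(5,2)$ with several chords, exactly what you are trying to exclude. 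So ``$x_1$ reached under impossible conditions'' is asserted, not proved. The paper avoids this by stopping after a single step: once $x_{i-1}x_j$ is forced, it looks at the subgraph bounded by the cycle $x_{i-1},x_i,\dots,x_j,x_{i-1}$, where $x_{i-1}$ has degree~$2$ and $x_j$ has degree~$4$, and amalgamates two copies of that subgraph to produce a $2$-connected $5$-regular planar graph with exactly one non-triangular face, contradicting Theorem~\ref{thm:no-NPG}. This closes the mixed case without any induction.

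Your $i=1$ paragraph has a similar problem: for $l=3$ you do get the shorter chord $x_2x_j$ and are done, but for $l=4$ the fourth neighbour of $x_1$ may lie inside $C$, and then the forced edge is $x_jx_d$, which need not be shorter than $x_1x_j$. The paper handles this by observing that $x_jx_d$ (if a chord) lies entirely in the range $\{x_j,\dots,x_d\}$, so one can pick a shortest chord there and rerun the $i\neq 1$ analysis---now the excluded outcome $x_2x_d$ cannot arise, and a genuine contradiction follows. Your phrase ``a short further chain of forced edges \dots\ rules out'' does not supply this.
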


\begin{proof}
  Let the graph {have}  some chords and the chord $x_i x_j$ with $j-i>1$ be the shortest one. It means that there is no other chord $x_sx_t$ with $0<t-s<j-i$.

We denote by $C$ the cycle $x_i,x_{i+1},\dots,x_j$ and by $C'$ \\ the cycle $x_j,x_{j+1},\dots,x_{d},x_1,\dots,x_i$. 

First, we consider the case $i\ne1$ and so $\deg(x_i)=\deg(x_j)=5$. 
Call $y_i^1$ and $y_i^2$  the neighbors of $x_i$ other than $x_{i-1},x_{i+1},x_j$  and those of $x_j$ other than  $x_{j-1},x_{j+1}$(or $x_1$),$x_i$  similarly $y_j^1$ and $y_j^2$.
  
 We will discuss several cases based on placement of the vertices $y_s^t$ within cycles $C$ and $C'$.

  If both $y_i^1,y_i^2$ are within $C'$, then the path $x_j,x_i,x_{i+1}$ is a weakly-5-saturated 3-path and by Observations \ref{obs:weak} and \ref{obs:inside-triangle}, we must have the triangular face $x_i,x_{i+1},x_j$. Since $j-(i+1)<j-i$ the edge $x_{i+1}x_j$ is not a chord.  
  But then $x_{i+1}$ would have three other neighbors inside that face, which is impossible.
  
  Similarly to the previous case, if both $y_j^1,y_j^2$ are within $C'$, then the path $x_i,x_j,x_{j-1}$ is a weakly-5-saturated 3-path and by Observations \ref{obs:weak} and \ref{obs:inside-triangle}, we must have the triangular face $x_i,,x_jx_{j-1}$. Since $(j-1)-i<j-i$ the edge $x_{i}x_{j-1}$ is not a chord.  
  But then $x_{j-1}$ would have three other neighbors inside that face, which is impossible.

If one of $x_i, x_j$ has both remaining neighbors inside $C$, say $x_i$, then the path $x_{i-1},x_i,x_j$ is weakly 5-saturated path and we must have edge $x_{i-1}x_j$ completing the triangle. We observe that $x_{i-1}$ cannot have any neighbors inside this triangle. If $j=d$, then it follows that $ i-1=1$ and $l=2$ and we are done. If $j<d$, then by the previous case, $x_j$ has one neighbor other than $x_{j-1}$ inside $C$ and the graph bounded by the cycle $x_{i-1},x_i,\dots,x_j,x_{i-1}$ has $x_{i-1}$ of degree 2 and $x_j$ of degree 4. Hence, we can take two copies and amalgamate them to obtain a 5-regular graph with the outer face of degree more than 3 and all other faces triangular. However, such a graph does not exist, so this case is impossible.

  The only remaining case is that $x_i$ and $x_j$ have exactly one neighbor within both $C$ and $C'$. In this case, 
 we can again obtain a contradiction in a similar manner as in Lemma~\ref{lem:no-chord-4}. Denote by $H$ the induced subgraph of $G$ consisting of all vertices on or within $C$ and create an isomorphic copy $H'$. Then amalgamate $x_i$ with $x'_i$ and $x_j$ with $x'_j$. The resulting graph is a 2-connected 5-regular graph with the outerface of degree $2(j-i)>3$ and all other faces of degree 3. Such a graph cannot exist by Theorem \ref{thm:no-NPG}.
  
Finally, we consider the case $i=1$, that is, the graph has a chord $x_1x_j$ with $j\ne 2, d$.
 If $l=3$, then $x_1$ has no neighbor within $C$ and so by Observation \ref{obs:weak}, $x_2x_j$ is an edge and  the graph has a shorter chord than $x_1x_j$, which is impossible.
   
   For $l=4$, the vertex $x_1$ has the fourth neighbor $y_1^1\not\in\{x_2, x_3, \dots, x_j\}\cup\{x_{d}\}$.

   If  $y_1^1$ is not the inside of $C$, then as in the previous case, the graph has a {shorter}  chord $x_2x_j$, a contradiction. Thus, $y_1^1$ is within $C$. By applying  Observation \ref{obs:weak} on the weakly-4-saturated 3-path   $x_{d},x_1,x_j$, we deduce that the triangle $x_1,x_{d},x_j$ is the boundary of a triangular face of the graph.

We have $1<j<d$. If $x_jx_d$ is a chord, we find the shortest chord $x_{i'}x_{j'}$ such that $j\leq i'<j'\leq d$ and repeat
the case $i'\neq1$ from the first part of the proof.

{If} $x_jx_d$ is not a chord, by the first part of this proof, and so $j=d-1$ and $\deg(x_d)=2$, which is impossible.
  
  We have exhausted all possibilities and the proof is complete.
\end{proof}

\begin{lemma}\label{lem:t-values}
  Let $t$ be the number of vertices of the $(k,d_1,d)$-endblock $B(k,l)$ not on the boundary of the outer exceptional face. Then the values of $t$ are as follows:

  \centering
  $
  \begin{array}{
    |>{\centering\arraybackslash$} p{1.0cm} <{$}
    |>{\centering\arraybackslash$} p{1.0cm} <{$}
    |>{\centering\arraybackslash$} p{1.0cm} <{$}
    |>{\centering\arraybackslash$} p{1.8cm} <{$} |}
    \hline
    k & d_1 & l & t        \\
    \hline 
    3 & 3 & 2 & (5-d)/3 \\
    \hline
    3 & 4 & 2 & 3 \\
    \hline
    3 & 5 & 2 & d+7 \\
    \hline
    4 & 3 & 2 & 2 \\
    \hline
    5 & 3 & 2 & d+3 \\
    \hline
    5 & 3 & 3 & d+4 \\
    \hline
    5 & 3 & 4 & d+5 \\
    \hline
  \end{array}
  $
\end{lemma}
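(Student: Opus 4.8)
The plan is to proceed by a double-counting / Euler's formula argument, handled separately for each of the seven rows of the table. Write $n$ for the total number of vertices, $m$ for the number of edges, and $f$ for the number of faces of the endblock $B(k,l)$. By definition $n-1$ vertices have degree $k$ and the exceptional vertex $x_1$ has degree $l$, so the handshake lemma gives $2m = k(n-1)+l$. The outer exceptional face has degree $d$ and every other face has degree $d_1$, so counting edge-face incidences gives $2m = d_1(f-1)+d$. Euler's formula $n-m+f=2$ then becomes a single linear relation among $n$, $d$ (and the fixed constants $k,d_1,l$); since $t = n-d$ (the $d$ boundary vertices $x_1,\dots,x_d$ are exactly the vertices on the outer face, and by Lemmas~\ref{lem:no-chord-3}--\ref{lem:no-chord-5} together with $2$-connectivity these are distinct), solving for $n$ and substituting yields $t$ as an explicit function of $d$.

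Concretely, I would first record the generic formula: from $2m=k(n-1)+l$ and $2m=d_1(f-1)+d$ we get $f = \frac{k(n-1)+l-d}{d_1}+1$, and plugging into $n - m + f = 2$ with $m = \frac{k(n-1)+l}{2}$ gives
\[
n\Bigl(1 - \frac{k}{2} + \frac{k}{d_1}\Bigr) = 2 + \frac{k-l}{2} - \frac{l-d}{d_1} - 1 + \frac{k}{d_1}.
\]
Then I would substitute each admissible triple $(k,d_1,l)$ in turn. For $d_1 = 3$ the coefficient of $n$ on the left is $1-\tfrac{k}{2}+\tfrac{k}{3} = 1-\tfrac{k}{6}$, which is nonzero for $k=4,5$, so $n$ (hence $t=n-d$) is determined linearly in $d$; for $(k,d_1)=(3,3)$ the coefficient of $n$ vanishes, so instead the relation directly pins down $d$-dependence and one reads off $t=(5-d)/3$. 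For $(3,4)$ and $(3,5)$ the same mechanical substitution gives $t=3$ (independent of $d$) and $t=d+7$ respectively. I would present this as a short computation for each row rather than a single opaque formula, since the $d_1=3$ versus $d_1\in\{4,5\}$ cases behave qualitatively differently.

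The one genuine subtlety — and the step I expect to need the most care — is justifying $t = n - d$, i.e.\ that the $d$ vertices listed on the boundary of the exceptional face are pairwise distinct and that no vertex lies on that boundary "invisibly." In a $2$-connected plane graph every face boundary is a cycle, so $x_1,\dots,x_d$ are indeed $d$ distinct vertices; the chord lemmas are not strictly needed for mere distinctness but reassure us that the outer face is a genuine induced cycle with no identifications. After that, the argument is purely the linear algebra of Euler's formula, and the "proof" of each row is a one-line substitution; the only thing to watch is keeping the parity/divisibility consistent (e.g.\ in row $(3,3,2)$ the formula $t=(5-d)/3$ is only an integer for $d\equiv 2\pmod 3$, which is the information actually being extracted).
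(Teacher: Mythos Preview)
Your approach is essentially identical to the paper's: both use the handshake lemma, the face--degree double count, and Euler's formula to solve for $n$, then set $t=n-d$ and substitute the admissible $(k,d_1,l)$. Two small slips to fix when you write it up: the sign on $(k-l)/2$ in your displayed relation should be negative, and the coefficient of $n$ does \emph{not} vanish for $(k,d_1)=(3,3)$ (it equals $1-\tfrac{k}{6}=\tfrac12$), so that row is handled by the same direct substitution as the others; neither affects the strategy.
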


\begin{proof}
  Denote the order of the graph by $n$, the number of its edges by $m$ and the number of faces by $f$, thus the sum of  the vertex degrees will be $k(n-1)+l$, which is twice the number of edges. By Euler's formula, the number of faces is
   $$f = m+2-n = \frac{k(n-1)+l}{2}+2-n.$$ 
   Also since the sum of the face degrees is twice the number of edges, we have
    $$(f-1)d_1+d=2m = k(n-1)+l.$$ 
    Solve for $n$, we have 
    $$n=\frac{(2-d_1)(k-l)+2d_1+2d}{2k+2d_1-kd_1}.$$ 
    Recall that $t = n - d$, so when we plug in the corresponding values of $k, d_1$, and $l$, we  obtain our desired values of $t$ as a function of $d$.
\end{proof}

\section{Type $(3,d_1)$}

\begin{lemma}\label{lem:no-3-endblock-3}
  A  $(3,3,d)$-endblock $B(3,2)$ does not exist for any $d$.
\end{lemma}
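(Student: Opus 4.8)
The plan is to combine the counting identity of Lemma~\ref{lem:t-values} with a short, completely explicit planarity argument. By that lemma, a $(3,3,d)$-endblock $B(3,2)$ would have $t=(5-d)/3$ vertices not on its exceptional face. Since $t$ must be a nonnegative integer, we need $d\le 5$ and $d\equiv 2\pmod 3$; as $d\ne d_1=3$ and, the endblock being $2$-connected, every face is bounded by a cycle of length at least $3$, so $d\ge 3$, the only surviving value is $d=5$. (The other integer solution $d=2$ would give $n=d+t=3$, which cannot host the $n-1=2$ vertices of degree $3$ required by the definition, so it is excluded in any case.) Hence it suffices to rule out a $(3,3,5)$-endblock, necessarily on $n=5$ vertices since then $t=0$.

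For such a graph $G$, let the exceptional $5$-face be bounded by the cycle $x_1x_2x_3x_4x_5$ with $x_1$ the exceptional vertex; because $t=0$, these are all the vertices of $G$. Then $\deg(x_1)=2$ forces $x_1$ to be adjacent only to $x_2$ and $x_5$, while $x_2,x_3,x_4,x_5$ each have degree $3$ and so carry exactly one edge besides their two cycle-edges. Since $x_1$ is already saturated, the extra edge at $x_3$ must be $x_3x_5$, which saturates $x_5$; hence the extra edge at $x_2$ cannot be $x_2x_5$ and must be $x_2x_4$. So $G$ is exactly the cycle $x_1x_2x_3x_4x_5$ together with the chords $x_2x_4$ and $x_3x_5$.

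Finally, I would observe that this $G$ admits no planar embedding with the $5$-cycle as a face: in any such embedding both chords lie in the disk bounded by the cycle on the non-face side, and there $x_2x_4$ separates $x_3$ from $x_5$, so $x_3x_5$ cannot be drawn without crossing $x_2x_4$, a contradiction. Therefore no $(3,3,d)$-endblock $B(3,2)$ exists. The only delicate point is the case analysis on $d$ (nonnegativity and integrality of $t$ combined with the face-degree constraint); note in particular that one cannot shortcut it using Lemma~\ref{lem:no-chord-3}, which is stated only for $d_1\in\{4,5\}$, so the $d_1=3$ case genuinely needs the explicit treatment above.
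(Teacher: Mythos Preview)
Your proof is correct and follows the paper's overall strategy: use Lemma~\ref{lem:t-values} to reduce to $d\in\{2,5\}$, dispose of $d=2$ by the vertex count $n=3$, and then handle $d=5$ explicitly on five vertices.

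The only difference is in the endgame for $d=5$. The paper invokes its saturated--path machinery: the weakly saturated $3$-path $x_5,x_1,x_2$ forces the edge $x_2x_5$ (Observation~\ref{obs:weak}), after which $x_5,x_2,x_3$ is strongly saturated and Observation~\ref{obs:strong} gives the contradiction. You instead determine the whole graph from the degree sequence (forcing $x_3x_5$, then $x_2x_4$) and finish with a Jordan--curve crossing argument showing the pentagon cannot be a face. Both arguments are valid two-line finishes; yours is self-contained and does not rely on Observations~\ref{obs:strong}--\ref{obs:weak}, while the paper's version illustrates the saturated--path framework that is reused throughout the later lemmas. Your remark that Lemma~\ref{lem:no-chord-3} is unavailable here (it covers only $d_1\in\{4,5\}$) is also accurate.
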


\begin{proof}
  By Lemma \ref{lem:t-values}, we must have $d=2$ or $d=5$, otherwise $t$ is not a non-negative integer. Recall that the number of vertices is $d+t$. If $d=2$, then $t=1$ and the graph has 3 vertices in total. Hence, we cannot have vertices of degree 3. When $d=5$, then $t=0$ and the graph has 5 vertices in total. By applying  Observation \ref{obs:weak} on the weakly-2-saturated 3-path $x_5,x_1,x_2$ we conclude that $x_2x_5$ is an edge of the graph. Now, the path $x_5,x_2,x_3$ is a strongly-3-saturated 3-path. Hence, the graph has a face with the length greater than 3 and $G$ cannot be completed into $B(3,2)$.
\end{proof}

\begin{lemma}\label{lem:no-3-endblock-4}
  A  $(3,4,d)$-endblock $B(3,2)$ does not exist for any $d$.
\end{lemma}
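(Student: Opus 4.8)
The plan is to combine a global face‑count coming from Euler's formula with a local analysis of the interior of the block. By Lemma~\ref{lem:t-values} a $(3,4,d)$-endblock $B(3,2)$ has exactly $t=3$ vertices off the outer exceptional face, so it has $n=d+3$ vertices; its degree sum $3(n-1)+2=3n-1$ must be even, so $n$ is odd and $d$ is even, and since a simple $2$-connected planar graph has all face degrees at least $3$ while $d\ne 4$ by definition, we have $d\in\{6,8,10,\dots\}$. By Lemma~\ref{lem:no-chord-3} the boundary cycle $x_1x_2\cdots x_d$ is chordless, so each $x_i$ with $2\le i\le d$ has exactly one neighbour $p_i$ in the interior while $x_1$ has none; all inner faces are quadrilaterals, and Euler's formula gives exactly $(d+4)/2$ of them.

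First I would bound $d$. The key claim is that the map sending a boundary edge to the unique inner face incident with it is injective apart from one coincidence: $x_1x_2$ and $x_dx_1$ are sent to a common face $Q^{*}$. Indeed, an inner quadrilateral carrying two non‑adjacent boundary edges would, by chordlessness, consist entirely of boundary edges and hence coincide with the whole $d$-cycle, forcing $d=4$; and an inner quadrilateral carrying two boundary edges meeting at a vertex $v$ forces those two edges to be consecutive in the rotation at $v$, which is impossible when $\deg(v)=3$ and, when $\deg(v)=2$, happens only at $v=x_1$ and pins the face down to $Q^{*}$. Consequently exactly $d-1$ of the $(d+4)/2$ inner faces are incident with a boundary edge, so $(d+4)/2\ge d-1$, i.e.\ $d\le 6$; with the parity restriction this gives $d=6$.

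Next I would dispose of $d=6$. The three interior vertices have degree sum $9$ and exactly $d-1=5$ edges run from the boundary into the interior, so the interior has exactly two edges; a simple graph on three vertices with two edges is a path, say $a-b-c$, so the centre $b$ has a single boundary neighbour. On the other hand, for $2\le i\le 5$ the inner quadrilateral on the boundary edge $x_ix_{i+1}$ has vertex set exactly $\{x_i,x_{i+1},p_{i+1},p_i\}$ --- the boundary edges at $x_i$ and $x_{i+1}$ other than $x_ix_{i+1}$ lie on other faces, by the injectivity above --- whence $p_i\ne p_{i+1}$ and $p_ip_{i+1}$ is an edge of the interior path, so one of $p_i,p_{i+1}$ equals the centre $b$. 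Thus each of the four pairs $\{x_i,x_{i+1}\}$ with $2\le i\le 5$ contains a neighbour of $b$; but no single vertex lies in all four of these pairs, contradicting that $b$ has only one boundary neighbour. This contradiction completes the proof.

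The step I expect to be the main obstacle is the injectivity claim together with the resulting identification of the inner quadrilateral on $x_ix_{i+1}$ as $\{x_i,x_{i+1},p_{i+1},p_i\}$: this is where the planar embedding (rotation systems) genuinely enters, and where the degree-$2$ vertex $x_1$, the degree-$3$ vertices, and the excluded value $d=4$ all have to be handled carefully. The remaining ingredients --- the Euler‑formula count and the final check that no vertex meets all four consecutive pairs --- are routine bookkeeping.
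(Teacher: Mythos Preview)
Your proof is correct and takes a genuinely different route from the paper's. The paper works locally with the saturated-path machinery of Observations~\ref{obs:strong} and~\ref{obs:weak}: after noting $t=3$ and that each $x_i$ with $i\neq 1$ has a unique interior neighbour, it forces edges one at a time (first $y_1x_d$ from the weakly saturated path $y_1,x_2,x_1,x_d$, then $y_1y_2$, then $y_2x_{d-1}$) until a strongly saturated $4$-path $x_4,x_3,y_2,x_{d-1}$ appears, which is a contradiction for every $d\ge 6$. You instead combine a parity observation (the degree sum $3n-1$ forces $d$ even) with a global counting argument --- the near-injectivity of the map from boundary edges to inner quadrilaterals, yielding $(d+4)/2\ge d-1$ --- to pin $d$ down to the single value $6$, and then finish with a short pigeonhole argument about the centre of the interior path. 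Your approach is self-contained and bypasses the saturated-path observations entirely; the paper's approach has the complementary advantage of disposing of all $d\ge 6$ in one stroke without ever isolating a specific value of $d$.
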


\begin{proof}
  Recall that by $t$ we denote the number of vertices of $B(3,2)$ inside of the cycle bounding the face of degree $d$, that is, all vertices other than $x_1,x_2,\dots,x_{d}$. It follows from Lemma~\ref{lem:t-values} that $t=3$.
  
  We denote the internal vertices by $y_1, y_2$ and $y_3$. Since $d_1=4$, there are at most two edges $y_iy_j$, which implies that there are at least five edges $y_ix_j$. As there is no chord by Lemma~\ref{lem:no-chord-3},  each $x_i, i\neq 1$ has exactly one neighbor $y_j$  and hence $d\geq6$. Because $x_1$ is of degree 2, it is a saturated vertex. Let $x_2y_1$ be an edge. Then $y_1,x_2,x_1,x_{d}$ is a weakly-3-saturated 4-path, and we must have the edge $y_1x_{d}$.

  If the third neighbor of $x_3$ is $y_1$, then we have a triangular face, which is impossible. Assume that $x_3$ is adjacent to $y_2$. Then $y_1,x_2,x_3,y_2$ is a weakly-3-saturated 4-path, and we must have the edge $y_1y_2$. Now, the path $y_2,y_1,x_{d},x_{d-1}$ is a weakly-3-saturated 4-path, and we must have the edge {$y_2x_{d-1}$}. Since $d\ge6$, we have $d-1\ne 4$ and so $x_4\ne x_{d-1}$.
  Then $x_4,x_3,y_2,x_{d-1}$ is a strongly-3-saturated 4-path, and $B(3,2)$ cannot be completed.
\end{proof}

\begin{lemma}\label{lem:no-3-endblock-5}
  A  $(3,5,d)$-endblock $B(3,2)$ does not exist for any $d$.
\end{lemma}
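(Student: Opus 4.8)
The plan is to mimic the structure of the proofs of Lemmas \ref{lem:no-3-endblock-3} and \ref{lem:no-3-endblock-4}: first pin down the number of internal vertices from Lemma \ref{lem:t-values}, then use the absence of chords (Lemma \ref{lem:no-chord-3}) together with repeated applications of the saturated-path observations (Observations \ref{obs:strong}, \ref{obs:weak}) to force more and more of the structure until a strongly-saturated $5$-path on an inner face appears, giving a contradiction via Observation \ref{obs:strong}.

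First I would record that by Lemma \ref{lem:t-values}, for $(k,d_1,l)=(3,5,2)$ we have $t=d+7$, so the graph has $n=d+t=2d+7$ vertices, of which $x_1$ has degree $2$ and all others have degree $3$; in particular $x_1$ is saturated. Since there are no chords, every boundary vertex $x_i$ with $i\neq 1$ has exactly one neighbor among the internal vertices; write $y(x_i)$ for that neighbor. Next I would start building the face structure at the exceptional vertex: the path $x_d, x_1, x_2$ must lie on a face of degree $5$ (Observation \ref{obs:weak} applied to the weakly-$3$-saturated $5$-path obtained after we know the two edges out of $x_1$), so there is a face $x_d, x_1, x_2, a, b$ with $a = y(x_2)$, $b = y(x_d)$ (they cannot be boundary vertices, else we'd get a chord), and the edge $ab$ is forced. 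Now $x_3$ has its internal neighbor $y(x_3)$; if $y(x_3)=a$ we would get a short face $x_2,x_3,a$ or be forced into a triangle, contradiction, so $y(x_3)\ne a$, and then $a,x_2,x_3,y(x_3)$ extends to a weakly-saturated $5$-path forcing yet another edge. Iterating this "zipper" along the boundary — each step consuming one more $x_i$ and one more internal vertex, and each time forcing a new face-completing edge — I expect to reach a configuration where some inner face boundary is a strongly-$3$-saturated $5$-path (both endpoints already saturated), which by Observation \ref{obs:strong} kills the endblock; the bookkeeping on $t=d+7$ should guarantee the counts work out so the propagation cannot terminate consistently.

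The main obstacle I anticipate is that with $d_1=5$ the forced faces are pentagons rather than triangles or quadrilaterals, so each "step" of the zipper involves two internal vertices and the case analysis of where the second internal neighbor of each newly-reached vertex can lie is more branchy than in the $d_1=3,4$ cases; in particular one must carefully handle the possibility that the propagation wraps around and closes up before a strongly-saturated path is produced, and rule that out using the parity/count $n=2d+7$ (an odd number of degree-$3$ vertices beyond $x_1$ would be the kind of parity obstruction exploited in Lemma \ref{lem:no-chord-4}, and a clean closing-up would force exactly such a forbidden configuration or a repeated pentagon sharing too many vertices). A secondary subtlety is the base cases of small $d$ (e.g. $d=3,4,5$), which may need to be checked by hand as in Lemma \ref{lem:no-3-endblock-3}, since for small $d$ the cycle $x_1,\dots,x_d$ is short enough that "$x_i$ and $x_j$ distinct" assumptions used in the generic step can fail.
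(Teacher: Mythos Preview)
Your plan is essentially the paper's approach: start at the exceptional vertex, use Lemma~\ref{lem:no-chord-3} to give each $x_i$ ($i\neq 1$) a unique interior neighbour, and then repeatedly apply Observation~\ref{obs:weak} to force pentagonal faces layer by layer until a strongly-$3$-saturated $5$-path appears. The paper carries this out by naming the interior neighbours $y_1,\dots,y_{d-1}$ (your $y(x_i)$), then a second layer $z_1,\dots,z_{d-2}$ (the common neighbours completing the pentagons on the $4$-paths $y_i,x_{i+1},x_{i+2},y_{i+1}$), and finally two vertices $w_1,w_2$ in a third layer; for $d\geq 7$ the path $y_{d-3},z_{d-3},w_2,z_2,y_3$ is strongly saturated and Observation~\ref{obs:strong} finishes it.

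Two places where your expectations diverge from what actually works. First, the parity/count heuristic on $n=2d+7$ is a red herring: there is no parity obstruction here (the degree sum is even for every $d$), and the paper never invokes $t=d+7$ in the argument. Second, the ``wrap-around'' for small $d$ is not handled by counting but by structure: for $d=3,4,5$ the propagation produces a cut-vertex ($z_1$, $w_1$, $w_2$ respectively), violating $2$-connectedness, and for $d=6$ one gets a quadrilateral face. You should also not skip the verification that the $y_i$ are pairwise distinct; the paper spends a short case analysis on this, since a coincidence $y(x_i)=y(x_j)$ would derail the zipper before it starts. With those adjustments your outline becomes the paper's proof.
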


\begin{proof}
Assume that the cycle $x_1,x_2,\dots,x_{d}$ is the boundary of the outerface and $\deg(x_1)=2$. By Lemma \ref{lem:no-chord-3}   the graph has no chord and so each $x_i$, except $x_1$, is adjacent to an interior vertex, say $y_{i-1}$. 
\begin{figure}[h]
\begin{tikzpicture}[scale=.8]
\draw (-0.001,-2.7)--(1.735,-2.069)--(2.658,-0.469)--(2.338,1.349)
--(0.923,2.537)--(-0.002,3)--(-0.924,2.537)--(-2.339,1.35)--
(-2.66,-0.469)--(-1.736,-2.069)--(-0.001,-2.7);
\filldraw(-0.002,3)circle(2pt);\node at (-0.002,3.3) {$x_1$};
\filldraw(-0.001,-2.7)circle(2pt);
\filldraw(1.735,-2.069)circle(2pt);
\filldraw(2.658,-0.469)circle(2pt);
\node at (2.95,-0.469) {$x_4$};
\filldraw(2.338,1.349)circle(2pt);
\node at (2.6,1.349) {$x_3$};
\filldraw(0.923,2.537)circle(2pt);
\node at (1,2.75) {$x_2$};
\filldraw(-0.924,2.537)circle(2pt);
\node at (-0.93,2.75) {$x_{d}$};
\filldraw(-2.339,1.35)circle(2pt);
\node at (-2.339,1.6){$x_{d-1}$};
\filldraw(-2.66,-0.469)circle(2pt);
\node at (-2.66,-0.65){$x_{d-2}$};
\filldraw(-1.736,-2.069)circle(2pt);
\node at (-1.736,-2.269){$x_{d-3}$};
\draw (-0.001,-2.7)--(-0.001,-2);	\draw(1.735,-2.069)--(1.285,-1.533);\draw(2.658,-0.469)--(1.969,-0.348);
\draw (2.338,1.349)--(1.732,0.999);
\draw (0.923,2.537)--(0.684,1.879);
\draw (-0.924,2.537)--(-0.684,1.879);
\draw (-2.339,1.35)--(-1.732,1);
\draw (-2.66,-0.469)--(-1.97,-0.348);
\draw (-1.736,-2.069)--(-1.286,-1.532);
\filldraw(-0.001,-2)circle(2pt);
\filldraw(1.285,-1.533)circle(2pt);
\filldraw(1.969,-0.348)circle(2pt);
\node at (1.65,-0.34) {$y_3$};
 \filldraw(1.732,0.999)circle(2pt);
 \node at (1.5,0.8) {$y_2$};
 \filldraw(0.684,1.879)circle(2pt);
 \node at (0.6,1.7) {$y_1$};
 \filldraw(-0.684,1.879)circle(2pt);
 \node at (-0.5,2.1) {$y_{d-1}$};
 \filldraw(-1.732,1)circle(2pt);
 \node at (-1.732,.8) {$y_{d-2}$};
 \filldraw(-1.97,-0.348)circle(2pt);
 \filldraw(-1.286,-1.532)circle(2pt);
 draw[green]-0.001,-2)--(-0.512,-1.41)--(-1.286,-1.532)--(-1.299,-0.752)--(-1.97,-0.348)--(-1.478,0.259)--(-1.732,1)--(-0.966,1.148)--(-0.684,1.879)--(0.684,1.879)--(0.963,1.149)--(1.732,0.999)--(1.476,0.261)--(1.969,-0.348)--(1.299,-0.749)--(1.285,-1.533)--(0.514,-1.41)--(-0.001,-2);
 \filldraw(0.514,-1.41)circle(2pt);
 \draw (0.514,-1.41)--(0.3,-1.1);
\filldraw(1.299,-0.749)circle(2pt);
\node at (1.55,-.9) {$z_{3}$};
\draw (1.299,-0.749)--(1,-.4);
\filldraw(1.476,0.261)circle(2pt);
\draw (1.476,0.261)--(0,0.6);
\draw (0,.6)--(0,0.9);
\draw (-1.478,0.259)-- (0,.6);
\filldraw[blue](0,.6)circle(2pt);
\node at (0,0.3) {$w_{2}$};
\node at (1.76,0.261) {$z_{2}$};
\filldraw(0.963,1.149)circle(2pt);
\draw (0.963,1.149)--(0,0.9);
\node at (1.2,1.3) {$z_{1}$};
\filldraw(-0.966,1.148)circle(2pt);
\draw[red] (-0.966,1.148)--(0,.9);
\node at (-0.966,1.4) {$z_{d-2}$};
\node at (0,1.2) {$w_{1}$};
\filldraw(0,0.9)circle(2pt);
\filldraw(-1.478,0.259)circle(2pt);
\node at (-1.478,0.1){$z_{d-3}$};
\filldraw(-1.299,-0.752)circle(2pt);
\draw (-1.299,-0.752)--(-1,-0.5);
\filldraw(-0.512,-1.41)circle(2pt);
\draw (-0.512,-1.41)--(-.3,-1);
\draw (7,-2.7)--(8.735,-2.069)--(9.658,-0.469)--(9.338,1.349)
--(7.923,2.537)--(7,3)--(-0.924+7,2.537)--(-2.339+7,1.35)--
(-2.66+7,-0.469)--(-1.736+7,-2.069)--(7,-2.7);
\filldraw(7,3)circle(2pt);
\filldraw(7,-2.7)circle(2pt);
\filldraw(8.735,-2.069)circle(2pt);
\node at (9,-2.35) {$x_{i+1}$};
\filldraw(9.658,-0.469)circle(2pt);
\node at (10,-0.469) {$x_i$};
\filldraw(9.338,1.349)circle(2pt);
\node at (9.9,1.349) {$x_{i-1}$};
\filldraw(7.923,2.537)circle(2pt);
\filldraw(-0.924+7,2.537)circle(2pt);
\node at (-0.93+7,2.75){$x_{j+1}$};
\filldraw(-2.339+7,1.35)circle(2pt);
\node at (-2.4+7,1.65) {$x_{j}$};
\filldraw(-2.66+7,-0.469)circle(2pt);
\node at (-2.66+7,-0.65){$x_{j-1}$};
\filldraw(-1.736+7,-2.069)circle(2pt);
\filldraw(-.3+7,-1)circle(2pt);
\node at (-.2+7,-.5) {$y$};
\draw (-0.512+7,-1.41)--(-.3+7,-1);
\draw (-2.339+7,1.35)--(-.3+7,-1)--(2.658+7,-0.469);
\draw[dashed] (-0.924+7,2.537)--(2.338+7,1.349);
\end{tikzpicture}
\caption{}
\label{fig1}
\end{figure}
All $y_i$'s are distinct, otherwise as we see in Figure \ref{fig1}(right), if $i<j$ and two vertices $x_i$ and $x_j$ have a common 
interior  neighbor, say $y$, then we have two cycles $x_{i},x_{i+1},\dots,x_{j-1},x_{j},y$ and
 $x_{1},x_{2},\dots,x_{i},x_{j},x_{j+1},\dots,x_{d}$ and we consider the cycle that the third neighbor of $y$ is not in. 
 If this cycle is a triangle then the graph has an interior triangular face or a cut-vertex, which is impossible. If this cycle is a square or pentagon then the graph has a cut-vertex, which is impossible. If the length of this cycle is greater than 5 then one of two paths $x_{i-1},x_{i},y,x_{j},x_{j+1}$ or $x_{i+1},x_{i},y,x_{j},x_{j-1}$ is a weakly-3-saturated 5-path and the graph has a chord $x_{i-1}x_{j+1}$ or  $x_{i+1}x_{j-1}$, respectively, which is impossible.

The path $y_{1},x_{2},x_1,x_{d},y_{d-1}$ is a weakly-3-saturated 5-path and so $y_1y_{d-1}$ is an edge.

For any $1\le i\le d-1$, the path $y_i,x_{i+1},x_{i+2},y_{i+1}$ is a weakly-3-saturated 4-path and so two vertices $y_i$ and $y_{i+1}$ have a common adjacent $z_i$ to construct a pentagonal face. These vertices are distinct. If $z_i=z_{i+1}$ for some $i$, then $\deg(y_{i+1})=2$ and if   $z_i=z_j$, where $j-i>1$, then $\deg(z_i)\ge4$, which is impossible.
If $d=3$, then $z_1$ is a cut-vertex, which is impossible and so $d\ge4$.

If $d\ge4$, then the path $z_{d-2},y_{d-1},y_1,z_1$ is a weakly-3-saturated 4-path and two vertices $z_{d-2}$ and $z_1$ must have a common neighbor $w_1$ to obtain a pentagonal face. We have $w_1\ne z_i$, $2\le i\le d-3$, otherwise  $\deg(w_1)\ge4$, which is impossible.
If $d=4$, then $w_1$ is a cut-vertex, which is impossible and so $d\ge5$.
If $d\ge5$, then the path $w_1,z_{1},y_{2},z_2$ is a weakly-3-saturated 4-path and two vertices $w_{1}$ and $z_2$ must have a common neighbor $w_2$ to obtain a pentagonal face (see Figure \ref{fig1} (left)). We have {$w_2\ne z_i$}, $3\le i\le d-4$, otherwise   $\deg(w_2)\ge4$, which is impossible.
If $d=5$, then $w_2$ is a cut-vertex and so $d\ge6$.
If $d=6$, then the cycle $w_2z_2y_3z_3$ is the boundary of a square face, which is impossible and so $d\ge7$.

If $d\ge7$, then the path $y_{d-3},z_{d-3},w_2,z_2,y_3$ is a strongly-3-saturated 5-path and we have an interior face with the length greater than 5, which is impossible.
\end{proof}

%

\section{Type $(4,3)$}

For the two remaining cases, we use dual graphs. Recall that the \emph{dual graph} $G^D$ of a planar graph $G$ with vertex, edge, and face sets $V(G),E(G), F(G)$, respectively,  has $V(G^D)=F(G),F(G^D)=V(G)$ and and edge $e=f_1f_2\in V(G^D)$ if and only if the faces $f_1$ and $f_2$ share an edge in $G$. In general, $G^D$ can be a multigraph with loops. In our case, we only look at dual graphs of blocks, hence no loops will arise. Concerning multiple edges, we can only have one double edge when $l=2$.

\begin{lemma}\label{lem:no-4-endblock}
  A  $(4,3,d)$-endblock $B(4,l)$ does not exist for any $l$ and $d$.
\end{lemma}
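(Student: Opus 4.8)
The plan is to mimic the structure of the proof of Lemma~\ref{lem:no-3-endblock-5}: start from the outer exceptional face, use Lemma~\ref{lem:t-values} to pin down the number $t$ of interior vertices (here $t=2$ regardless of $d$, for $l=2,3,4$), and then use Lemma~\ref{lem:no-chord-4} together with the saturated-path observations to force the structure of the graph until a contradiction (a forbidden interior face length, a vertex of degree $2$, a cut vertex, or a contradiction with the degree or parity count) appears. Alternatively, since the section heading announces the use of dual graphs, I would pass to the dual: a $(4,3,d)$-endblock $B(4,l)$ dualizes to a plane graph in which every vertex has degree $3$ (the triangular faces become degree-$3$ vertices) except one vertex of degree $d$, every bounded face is a $4$-gon (since every vertex of $G$ other than $x_1$ has degree $4$), and the unbounded face is an $l$-gon with $l\in\{2,3,4\}$. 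That is, the dual is a plane cubic-except-one-vertex graph that is face-regular-except-one-face, and we want to rule it out using Theorem~\ref{thm:no-NPG} after an amalgamation trick that removes the exceptional face, exactly as in the chord lemmas.

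Concretely, I would first handle the case $l=2$: then $x_1$ has exactly two neighbours, which must be $x_2$ and $x_d$, and by Lemma~\ref{lem:no-chord-4} the only possible chord is $x_2x_d$; applying Observation~\ref{obs:weak} to the weakly-$4$-saturated $3$-path $x_2,x_1,x_d$ forces the edge $x_2x_d$ and hence a triangular face $x_1x_2x_d$. Contracting or deleting $x_1$ (equivalently, working with the graph $G'=G-x_1+x_2x_d$) yields a $4$-regular plane graph with all faces of degree $3$ except the outer face, now of degree $d-1>3$ (one checks $d\ge 5$ from $t=2$ and a small parity/degree count, so $d-1\ge 4$), contradicting Theorem~\ref{thm:no-NPG}. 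For $l=3$ the degree sum $k(n-1)+l = 4(n-1)+3$ is odd, which is impossible since it must equal $2m$; this is the same one-line parity argument already used inside Lemma~\ref{lem:no-chord-4}. So the real content is the case $l=4$.

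For $l=4$ we have $t=2$: call the two interior vertices $y_1,y_2$. Since there are no chords (Lemma~\ref{lem:no-chord-4}, and $x_2x_d$ is not forced when $l\neq 2$), every boundary vertex $x_i$ with $i\neq 1$ sends at least one edge inward, but the only interior vertices are $y_1,y_2$, each of degree $4$; thus at most $8$ edges can be incident to $\{y_1,y_2\}$, at least one of which may be $y_1y_2$, leaving at most $6$ (or $7$) edges to the boundary, so $d-1\le 7$, i.e.\ $d$ is bounded and small. I would then enumerate: each $x_i$ ($i\neq1$) has exactly $k-2=2$ neighbours off the boundary cycle, so needs two edges to $\{y_1,y_2\}$ (counting multiplicity via the two slots), which already forces $y_1$ and $y_2$ to absorb $2(d-1)$ edge-endpoints; combined with $\deg(y_1)+\deg(y_2)=8$ this gives $2(d-1)\le 8$, so $d\le 5$, and $x_1$'s four neighbours plus parity further restrict things, after which a direct check of the handful of remaining plane graphs (using Observations~\ref{obs:strong},~\ref{obs:weak},~\ref{obs:inside-triangle} to rule out each triangulation attempt, or the amalgamation trick to invoke Theorem~\ref{thm:no-NPG}) finishes the proof.

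The main obstacle I anticipate is the $l=4$ case: unlike $l=2$ there is no single forced chord to exploit, and the bound on $d$ from the counting argument still leaves a few explicit configurations of a cubic-ish dual (or, primally, a near-triangulation on $d+2$ vertices with two interior vertices) that must each be eliminated by hand. Getting the counting tight enough that only one or two cases survive — rather than a sprawling case analysis — is where care is needed; I would lean on the dual formulation, where the statement becomes ``no plane graph with all vertices of degree $3$ except one of degree $d$, all faces quadrilaterals except one $4$-gon outer face'' and then amalgamate two copies of the region cut off by the exceptional face to contradict Theorem~\ref{thm:no-NPG}, exactly paralleling Lemmas~\ref{lem:no-chord-3} and~\ref{lem:no-chord-5}.
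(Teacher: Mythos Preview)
Your proposal rests on a misreading of Definition~\ref{def:block}: the exceptional degree must satisfy $1<l<k$, so for $k=4$ only $l\in\{2,3\}$ are admissible. There is no case $l=4$, yet you identify it as ``the real content'' and ``the main obstacle'', and most of your outline is devoted to it. (Your side claim that $t=2$ for all $l\in\{2,3,4\}$ is also off: for $l=3$ the formula in Lemma~\ref{lem:t-values} gives a non-integer, consistent with the parity obstruction you note, and for $l=4$ it would give $t=3$, not $2$.)

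With $l=3$ eliminated by parity, the entire lemma is the case $l=2$, and here your primal reduction has a gap. You correctly force the chord $x_2x_d$ via the triangular inner face at $x_1$. But deleting $x_1$ leaves $x_2$ and $x_d$ with degree $3$; since $x_2x_d$ is already an edge, your ``$G'=G-x_1+x_2x_d$'' is a multigraph with a digon, not a simple $4$-regular graph, and Theorem~\ref{thm:no-NPG} (stated for simple $2$-connected graphs) does not apply. So you do not obtain a $4$-regular nearly Platonic graph from which to derive a contradiction.

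The paper's argument for $l=2$ does use the dual, but not via amalgamation. It first checks $d\ge4$ directly; then in the dual $D$ every vertex has degree $3$ except the vertex $f$ corresponding to the outer face, which has degree $d$. The key step is to \emph{split} $f$ into vertices $f^0,\dots,f^q$ each of degree at most $3$ (writing $d=3q+r$). Depending on whether the last piece $f^q$ has degree $0$, $1$, or $2$, the resulting plane graph $D'$ is either a $2$-connected cubic graph with all faces squares except one large face (ruled out by Theorem~\ref{thm:no-NPG}) or a $(3,4,d')$-endblock $B(3,2)$ (ruled out by Lemma~\ref{lem:no-3-endblock-4}). Your counting idea can in fact be redirected to the genuine case $l=2$: with $t=2$ interior vertices and the single chord $x_2x_d$ accounted for, the edge count between boundary and interior forces $d\in\{5,6\}$, and those two configurations could be eliminated by hand, yielding a more elementary alternative. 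But as written your proposal does not carry this out, and the argument you do give for $l=2$ does not close.
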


\begin{proof}
We cannot have $l=3$, as in that case the endblock would have a single vertex of an odd degree, a nonsense. Thus, we have $l=2$. 
  \begin{figure}[h]
 \begin{center}
\begin{tikzpicture}
\filldraw(0,4)circle(2pt);\node at(0,4.3){$x_1$};
\filldraw(1,3.5)circle(2pt);\node at(1.2,3.7){$x_2$};
\filldraw(1.6,2.8)circle(2pt);\node at(1.8,2.9){$x_3$};
\filldraw(2.2,2)circle(2pt);\node at(2.5,2){$x_4$};
\filldraw(-1.6,2.8)circle(2pt);\node at(-2.1,2.9){$x_{d-1}$};
\filldraw(-2.2,2)circle(2pt);\node at(-2.7,2){$x_{d-2}$};
\filldraw(-1,3.5)circle(2pt);\node at(-1.2,3.7){$x_d$};
\draw (-2.5,1.5)--(-2.2,2)--(-1.6,2.8)--(-1,3.5)--(0,4)--(1,3.5)--(1.6,2.8)--(2.2,2)--(2.5,1.5);

\draw[red] (-1.8,1.6)--(-2.2,2)--(-1.1,2.2)--(-1.6,2.8)--(0,2.8)--(-1,3.5)--(1,3.5)--(0,2.8)--(1.6,2.8)--(1.1,2)--(2.2,2)--(1.8,1.6);
\node at(0,3.7){$f_d=f_1$};\node at(1,3.1){$f_2$};
\node at(1.6,2.3){$f_3$};\node at(-1.6,2.3){$f_{d-2}$};
\node at(-.9,3.1){$f_{d-1}$};\node at(0,3.2){$g_d=g_2$};
\node at(1.1,2.5){$g_3$};\node at(1.7,1.8){$g_4$};
\node at(-.9,2.6){$g_{d-1}$};\node at(-1.5,1.9){$g_{d-2}$};
\end{tikzpicture}

 \end{center}
  \caption{$B(4,2)$}
 \label{fig-2}
\end{figure}
  
  First we show that $d\ge4$. Suppose $d=3$. Then the outerface is a triangle $x_1,x_2,x_3$. Vertex $x_1$ is saturated, hence the inner face containing $x_1$ is the triangle $x_1,x_2,x_3$. By Lemma~\ref{lem:t-values}, we have $t=2$, hence there are exactly two other vertices $y_1$ and $y_2$, each of degree 4. But then $y_1$ would have to be adjacent to $x_2,x_3,y_2$ and also to $x_1$, which is impossible.

 Now we denote the outerface of $B(4,2)$ by $f$ and an inner face containing edge $x_ix_{i+1}$ by $f_i$ for $i=1,2,\dots,d$. Further, for $i=2,3,\dots,d$ the inner face containing $x_i$ but not sharing an edge with the outerface will be denoted by $g_i$ (see Figure \ref{fig-2}). 

 Let $D$ be the dual graph of $B(4,2)$. Because $d\geq4$, we have $\deg_D(f)\geq4$.  
 Notice that we have double edge $ff_1$ (see Figure~\ref{fig-3}).
 
  \begin{figure}[h!]
 \begin{center}
\begin{tikzpicture}
\filldraw(0,4)circle(2pt);\node at(0,5.2){$f$};
\filldraw(0,4.8)circle(2pt);
\node at(-.7,4){$f_d=f_1$};

\filldraw(0,3.5)circle(2pt);\node at(0.08,3.3){$g_2=g_d$};
\filldraw(1,3.5)circle(2pt);\node at(.9,3.2){$f_2$};
\filldraw(1.6,2.8)circle(2pt);\node at(1.5,2.5){$g_3$};
\filldraw(2.2,2)circle(2pt);\node at(1.9,2){$f_3$};
\filldraw(-1,3.5)circle(2pt);\node at(-.85,3.1){$f_{d-1}$};
\filldraw(-1.6,2.8)circle(2pt);\node at(-1.2,2.6){$g_{d-1}$};
\filldraw(-2.2,2)circle(2pt);\node at(-1.7,2){$f_{d-2}$};

\draw (-2.5,1.5)--(-2.2,2)--(-1.6,2.8)--(-1,3.5)--(0,3.5)--(1,3.5)--(1.6,2.8)--(2.2,2)--(2.5,1.5);
\draw (0,3.5)--(0,4);


\draw (-1.6,2.8)--(-1.1,2.8);
\draw (1.6,2.8)--(1.1,2.8);
\draw (0,4.8)[out=-135, in=135]to(0,4);
\draw (0,4.8)[out=-45, in=45]to(0,4);
\draw (0,4.8) [out=-30, in=90] to(1,3.5);
\draw (0,4.8)[out=-150, in=90] to(-1,3.5);
\draw (0,4.8) [out=-15, in=90] to(2.2,2);
\draw (0,4.8) [out=-165, in=90] to(-2.2,2);
\draw (0,4.8) [out=0, in=90] to(2.6,1.8);
\draw (0,4.8) [out=-180, in=90] to(-2.6,1.8);
\node at (0,1.8){$D$};
\end{tikzpicture}

 \end{center}
 \caption{The dual graph of $B(4,2)$}
 \label{fig-3}
\end{figure}
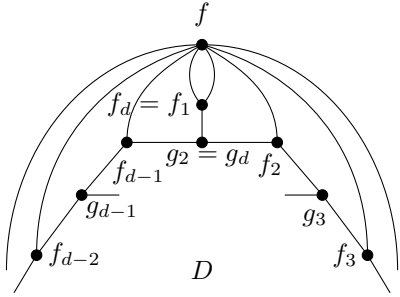

 Let $d=3q+r$ where $0\leq r\leq2$. We have $d\geq4$, hence $q\geq1$. We now construct a new graph $D'$ with $\Delta(D')=3$ as follows. We split vertex $f$ into vertices $f^0,f^1,\dots,f^q$ and each $f^i$ will be incident with edges $f^if_{3i+1},f^if_{3i+2},f^if_{3i+3}$ except possibly for $f^q$, which may be of degree zero, one, or two (see Figure \ref{fig-4}).

 \begin{figure}[h!]
 \begin{center}
\begin{tikzpicture}
\filldraw(1.5,4.1)circle(2pt);\node at(1.8,4.2){$f^0$};
\filldraw(3.5,3.1)circle(2pt);\node at(3.5,3.4){$f^1$};
\filldraw(-.7,4.1)circle(2pt);\node at(-1,4.2){$f^q$};
\filldraw(-2.3,3.6)circle(2pt);\node at(-2,3.9){$f^{q-1}$};
\filldraw(0,4)circle(2pt);
\node at(.1,4.3){$f_d=f_1$};
\filldraw(0,3.5)circle(2pt);\node at(0.1,3.3){$g_d=g_2$};
\filldraw(1,3.5)circle(2pt);\node at(.9,3.2){$f_2$};
\filldraw(1.6,3.2)circle(2pt);\node at(1.6,2.9){$g_3$};
\filldraw(2.2,2.8)circle(2pt);\node at(2.2,2.5){$f_3$};
\filldraw(2.8,2.5)circle(2pt);\node at(2.8,2.2){$g_4$};
\filldraw(3.4,2.2)circle(2pt);\node at(3.4,1.9){$f_4$};
\filldraw(-1,3.5)circle(2pt);\node at(-.9,3.2){$f_{d-1}$};
\filldraw(-1.6,3.2)circle(2pt);\node at(-1.5,2.9){$g_{d-1}$};
\filldraw(-2.2,2.8)circle(2pt);\node at(-2.2,2.5){$f_{d-2}$};
\filldraw(-2.8,2.5)circle(2pt);\node at(-2.8,2.2){$g_{d-2}$};
\filldraw(-3.4,2.2)circle(2pt);\node at(-3.4,1.9){$f_{d-3}$};
\draw (1.6,3.2)--(1.3,2.9);\draw (-1.6,3.2)--(-1.1,2.8);
\draw (2.8,2.5)--(2.5,2.2);\draw (-2.8,2.5)--(-2.6,2.2);

\draw (-3.6,1.9)--(-3.4,2.2)--(-2.8,2.5)--(-2.2,2.8)--(-1.6,3.2)--
(-1,3.5)--(0,3.5)
--(1,3.5)--(1.6,3.2)--(2.2,2.8)--(2.8,2.5)--(3.4,2.2)--(3.6,1.9);
\draw (0,3.5)--(0,4);

\draw (1.5,4.1)--(0,4);
\draw (1,3.5)--(1.5,4.1);
\draw (2.2,2.8)--(1.5,4.1);
\draw (3.5,3.1)--(3.4,2.2);
\draw (3.5,3.1)--(3.9,2);
\draw (-0.7,4.1)--(0,4);
\draw (-2.3,3.6)--(-1,3.5);
\draw (-2.3,3.6)--(-2.2,2.8);
\draw (-2.3,3.6)--(-3.4,2.2);
\node at (0,1.8){$D'$};
\end{tikzpicture}

 \end{center}
 \caption{$r=1$}
 \label{fig-4}
\end{figure}

 The outer boundary is now $f^0,f_3,g_4,f_4,f^1,f_6,\dots,f_1$. If $f^q$ is of degree zero, then we remove it and obtain an outerface $f^0,f_3,g_4,f_4,f^1,f_6,\dots,f_{d-2},f_{q-1},f_1$ of length at least six. But then we have a 2-connected, 3-vertex regular nearly Platonic graph with one face of size at least 6 and all other faces of size 4, which does not exist by Theorem \ref{thm:no-NPG}.
 
 
 If $f^q$ is of degree one, as in Figure \ref{fig-4}, we remove it and obtain an outerface  $f^0,f_3,g_4,f_4,f^1,f_6,\dots,f_{q-1},f_{d-1},g_2,f_1$ where $f_1$ is now of degree two  and we have a $(3,4,d')$-endblock $B(3,2)$, which does not exist by Lemma~\ref{lem:no-3-endblock-4}. 
 
 Finally, when  $f^q$ is of degree two, then the boundary is\\ $f^0,f_3,g_4,f_4,f^1,f_6,\dots,g_{d-1},f_{d-1},f_{q},f_1$ where $f^q$ is now of degree two and then again we have a  $(3,4,d')$-endblock $B(3,2)$, which does not exist by Lemma~\ref{lem:no-3-endblock-4}.
\end{proof}

\section{Type $(5,3)$}	

\begin{lemma}\label{lem:no-5-endblock}
	A  $(5,3,d)$-endblock $B(5,l)$ does not exist for any $l$ and $d$.
\end{lemma}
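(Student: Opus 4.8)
The plan is to mirror the treatment of Lemma~\ref{lem:no-4-endblock}: assume a $(5,3,d)$-endblock $B=B(5,l)$ exists, pass to its dual, and shrink the dual to an object ruled out by Theorem~\ref{thm:no-NPG} or by Lemma~\ref{lem:no-3-endblock-5}. Since $B$ is simple and $2$-connected we have $d\ge 4$, and by $1<l<5$ the exceptional vertex has degree $l\in\{2,3,4\}$; a few small values of $d$ may need to be treated separately, directly from the vertex/face counts of Lemma~\ref{lem:t-values} and the saturated-path observations of Section~2. Form the dual $D=B^{D}$. Because every inner face of $B$ is a triangle and $x_1$ lies on the exceptional $d$-face together with $l-1$ triangular inner faces, $D$ is a $2$-connected plane (multi)graph in which every vertex has degree $3$ except one vertex $f$ (the dual of the exceptional face) of degree $d$, and every face is a pentagon except one face $F_1$ (the dual of $x_1$) of degree $l$; the $l$ vertices on the boundary of $F_1$ are $f$ and the duals $T_1,\dots,T_{l-1}$ of the triangular faces at $x_1$, so $F_1$ is incident with $f$. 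When $l=2$ the face $F_1$ is a digon and $f$ carries a double edge (exactly as in Lemma~\ref{lem:no-4-endblock}); for $l=3,4$ the graph $D$ is simple.

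The reduction proceeds in two stages. If $l\in\{3,4\}$, first perform a local surgery near $F_1$: delete one edge of the boundary cycle of $F_1$ that is incident with $f$ (such an edge exists since $F_1$ meets $f$, and it is not a bridge as $D$ is $2$-connected). This lowers $\deg f$ by one, turns one $T_i$ into a vertex of degree $2$, and merges $F_1$ with the neighbouring pentagon into a single face of degree $l+3\in\{6,7\}$, so that apart from the face of $f$ there is again exactly one non-pentagonal face, and it contains $f$. In the second stage, split the unique remaining high-degree vertex $f$, now of degree $d$ or $d-1$, into $\lceil\deg f/3\rceil$ vertices of degree at most $3$ placed consecutively around a new outer region, choosing the cyclic location of the split inside the wedge of $f$ cut out by $F_1$ (resp.\ by the merged $(l+3)$-gon), exactly as in the proof of Lemma~\ref{lem:no-4-endblock}; this opens that special face up into the new outer face, leaving all other faces pentagonal. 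According to $\deg f\bmod 3$ the last split vertex has degree $0$, $1$, or $2$: removing a degree-$0$ piece yields a $2$-connected $3$-regular plane graph all of whose faces but one are pentagons, contradicting Theorem~\ref{thm:no-NPG}; removing a degree-$1$ piece, or keeping a degree-$2$ piece, leaves exactly one vertex of degree $2$, so the reduced graph is a $(3,5,d')$-endblock $B(3,2)$, contradicting Lemma~\ref{lem:no-3-endblock-5}. For $l\in\{3,4\}$ the vertex $T_i$ produced by the surgery must be arranged to coincide with, or be absorbed into, the degree-$2$ piece produced by the split, which is what forces the extra care about residues and about which edge of $F_1$ is deleted.

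The principal obstacle, as in Lemma~\ref{lem:no-4-endblock}, is bookkeeping rather than a new idea. In each of the roughly nine subcases one must check that the reduced plane graph is genuinely simple---no stray digon survives the splitting, which uses that $B(5,l)$ has no chords other than the possible $x_2x_d$ when $l=2$ (Lemma~\ref{lem:no-chord-5}), so that the triangular faces dual to consecutive boundary edges of $B$ are distinct---and that it is $2$-connected, so that Theorem~\ref{thm:no-NPG} and Lemma~\ref{lem:no-3-endblock-5} genuinely apply; one must also compute the degree of the new outer face and confirm it is not $5$, which holds once $d$ is above the small threshold handled at the outset, since that face is the union of the opened-up exceptional face of $D$ with at least one pentagon. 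A secondary point is that, when $l\in\{3,4\}$, one must verify that deleting the chosen edge of $F_1$ really merges $F_1$ with a pentagon (and not, say, with the face of $f$) and that exactly one vertex drops to degree $2$ before the split; this is where the explicit local structure of $B$ around $x_1$---a fan of $l-1$ triangles on interior neighbours of $x_1$---is used.
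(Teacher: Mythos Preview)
Your overall architecture matches the paper: pass to the dual $D$, split the high-degree vertex $f$ into pieces of degree at most $3$, and land either on a $3$-regular near-Platonic graph (Theorem~\ref{thm:no-NPG}) or on a $(3,5,d')$-endblock $B(3,2)$ (Lemma~\ref{lem:no-3-endblock-5}). The treatment of $l=2$ and of the small case $d=3$ is fine.

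The gap is in your handling of $l\in\{3,4\}$. The ``local surgery'' step---deleting an edge of $\partial F_1$ incident to $f$---is both unnecessary and harmful. It produces a vertex $T_i$ of degree $2$ \emph{before} the split, and the split may then produce a second low-degree vertex $f^{q}$. Your sentence that $T_i$ ``must be arranged to coincide with, or be absorbed into, the degree-$2$ piece produced by the split'' does not go through: $T_i$ is the dual of a fixed triangular face of $B$, not a split piece, so it cannot coincide with any $f^{j}$; and suppressing $T_i$ (replacing it by an edge) shortens the adjacent inner pentagon $F_y$ to a quadrilateral, destroying the ``all inner faces are pentagons'' conclusion. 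So in the residue classes where $f^{q}$ ends up with degree $1$ or $2$ you genuinely have two exceptional vertices, and neither Theorem~\ref{thm:no-NPG} nor Lemma~\ref{lem:no-3-endblock-5} applies.

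The paper avoids this entirely by observing that no surgery is needed: the exceptional dual face $F_1$ is incident to $f$ along \emph{two} edges, namely $ff_1$ and $ff_d$, and since $f_1$ goes to $f^{0}$ while $f_d$ goes to the last split piece, $F_1$ opens up into the new outer face automatically when $f$ is split. Thus after the split every inner face is already a pentagon, only the single vertex $f^{q}$ can have degree below $3$, and the trichotomy on $\deg f^{q}$ finishes the proof exactly as you describe for $l=2$. Dropping your surgery step and using this observation repairs the argument.
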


\begin{proof}
  We again use the dual graph technique to prove the claim. We start with an observation that  the case $d=3$ is impossible. If we have such a graph $G$ with vertex $x_1$ of degree $l$ where $3\leq l\leq4$, all other vertices of degree 5, inner faces triangular, and $d=3$, then the outerface is also a triangle. But then the dual graph $G^D$ is a 2-connected, cubic graph with one face of degree $l\neq5$, and all remaining faces of degree 5, which is impossible by Theorem~\ref{thm:no-NPG}.
  
  If $l=2$, then the path $x_3,x_1,x_2$ is weakly saturated, and we must have the edge $x_2x_3$ completing the boundary of the inner triangular face. But then the remaining neighbors of $x_2$ and $x_3$ are outside of the cycle $x_1,x_2,x_3$, that is, within the outerface, which is impossible.
  
  We use the same notation as in the previous proof, with the exception that  for $i=1,2,\dots,d$ the two inner faces containing $x_i$ but not sharing an edge with the outerface will be denoted by $g_i$ and $h_i$ (see Figure \ref{fig5}). 
  
  The case $l=2$ is essentially the same as for the $(4,3,d)$-endblock $B(4,2)$ and we omit it.
  

    When $l=3$, the graphs $B(5, 3)$ and its dual graph are shown in Figures \ref{fig5} and \ref{fig6}, respectively. After splitting vertex $f$ the outerface of $D'$ is  $f^0,f_3,g_4,h_4,f_4,f^1,\dots,f_1$ of length at least six. 
   
  \begin{figure}[h]
    \begin{center}
    \begin{tikzpicture}
  \filldraw(-2,1.5)circle(2pt);\node at(-2,1.8){$x_{d-1}$};
   \filldraw(-1,1.9)circle(2pt);\node at(-1,2.1){$x_{d}$};
    \filldraw(0,2.4)circle(2pt);\node at(0,2.7){$x_{1}$};
     \filldraw(1,1.9)circle(2pt);\node at(1,2.1){$x_{2}$};    
      \filldraw(2,1.5)circle(2pt);\node at(2,1.8){$x_{3}$};
\node at(.3,2){$f_{1}$};   \node at(1.4,1.5){$f_{2}$};   
\node at(-.3,2){$f_{d}$};   \node at(-1.4,1.5){$f_{d-1}$};        
\node at(.6,1.6){$g_{2}$};   \node at(1,1.3){$h_{2}$};      
\node at(1.6,1.1){$g_{3}$};   \node at(1.9,.9){$h_{3}$}; 
\node at(-.6,1.6){$h_{d}$};   \node at(-.95,1.2){$g_{d}$};        
\draw (-2.3,1.2)--(-2,1.5)--(-1,1.9)--(0,2.4)--(1,1.9)--(2,1.5)--(2.3,1.2);
\draw (-2.1,.9)--(-2,1.5)--(-1.7,.9);\draw (-1.2,1.1)--(-2,1.5);
\draw (-1.2,1.1)--(-1,1.9)--(-.7,1.1);\draw (0,1.7)--(-1,1.9);
\draw (0,1.7)--(0,2.4);
\draw (1.2,1.1)--(1,1.9)--(.7,1.1);\draw (0,1.7)--(1,1.9);
\draw (2.1,.9)--(2,1.5)--(1.7,.9);\draw (1.2,1.1)--(2,1.5);
\node at (0,1){$B(5,3)$};
  \end{tikzpicture}
  \end{center}
  \caption{$l=3$}
  \label{fig5}
  \end{figure}

    \begin{figure}[h]
    \begin{center}
    \begin{tikzpicture}
 \filldraw(-3,1.4)circle(2pt);\node at(-3,1.7){$h_{d-1}$};\filldraw(-2.4,1.6)circle(2pt);\node at(-2.4,1.3){$f_{d-1}$};\filldraw(-1.8,1.8)circle(2pt);\node at(-1.8,2.1){$g_{d}$};       
  \filldraw(-1.2,2)circle(2pt);\node at(-1.2,2.3){$h_{d}$};
   \filldraw(-.6,2.2)circle(2pt);\node at(-.6,1.9){$f_{d}$};
    \filldraw(0,2.6)circle(2pt);\node at(0,2.9){$f$};
   \filldraw(.6,2.2)circle(2pt);\node at(.6,1.9){$f_{1}$};    
  \filldraw(1.2,2)circle(2pt);\node at(1.2,2.3){$g_{2}$};    
   \filldraw(1.8,1.8)circle(2pt);\node at(1.8,2.1){$h_{2}$};
 \filldraw(2.4,1.6)circle(2pt);\node at(2.4,1.3){$f_{2}$};  
 \filldraw(3,1.4)circle(2pt);\node at(3,1.7){$g_{3}$};

\draw (-3.2,1.2)--(-3,1.4)--(-2.4,1.6)--(-1.8,1.8)--(-1.2,2)--(-.6,2.2)
--(0,2.6)--(.6,2.2)--(1.2,2)--(1.8,1.8)--(2.4,1.6)--(3,1.4)--(3.2,1.2);
\draw (.6,2.2)--(-.6,2.2);
\draw (0,2.6)[out=-170,in=80]to (-2.4,1.6);\draw (0,2.6)[out=-10,in=100]to (2.4,1.6);
\draw (0,2.6)[out=180,in=90]to (-3.5,1.6);\draw (0,2.6)[out=0,in=90]to (3.5,1.6);
\draw (-3,1.4)--(-3,1.1);\draw (-1.8,1.8)--(-1.7,1.5);
\draw (-1.2,2)--(-1.2,1.7);\draw (1.2,2)--(1.2,1.9);
\draw (3,1.4)--(3,1.1);\draw (1.8,1.8)--(1.7,1.5);

\node at (0,1){$D$};
  \end{tikzpicture}
  \end{center}
  \caption{The dual graph of $B(5,3)$}
  \label{fig6}
  \end{figure}
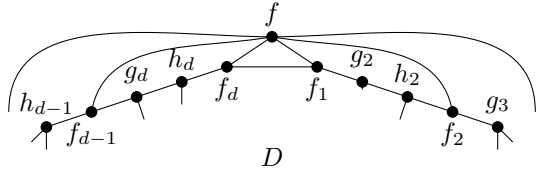
  
When $f_q$ is of degree zero,  the boundary is
   $f^0,f_3,g_4,h_4,f_4,f^1,\dots,f^{q-1},f_d,f_1$ 
   and we have a 2-connected, 3-vertex regular 
    nearly Platonic graph with one face of size at least 7 and all other faces of size 5, and such a graph does not exist by Theorem~\ref{thm:no-NPG}.  When $f^q$ is of degree one, by omitting $f^q$, the boundary is $f^0,f_3,g_4,h_4,f_4,f^1,\dots,g_d,h_d,f_d,f_1$  
   and $\deg(f_d)=2$, then the outer boundary is of length at least 6, and when $f^q$ is of degree two, the boundary is $f^0,f_3,g_4,h_4,f_4,f^1,\dots,f_{d-1},f^q,f_d,f_1$  and $\deg(f^q)=2$, then the outer boundary is of length at least 7. In the both cases, we have a $(5,3,d')$-endblock $B(5,3)$, which does not exist by Lemma~\ref{lem:no-3-endblock-4}.
  
  When $l=4$, then the outerface in $D'$ is  $f^0,f_3,g_4,h_4,f_4,f^1,\dots,g_1,f_1$ of length at least seven.
  
    \begin{figure}[h]
    \begin{center}
    \begin{tikzpicture}
    \draw (-5.5,1)--(5.5,1);
  \filldraw(-4.8,1)circle(2pt);\node at(-4.8,1.3){$h_{d-2}$};  
  \filldraw(-4.2,1)circle(2pt);\node at(-4.2,.7){$f_{d-1}$};     
     \filldraw(-3.6,1)circle(2pt);\node at(-3.6,1.3){$g_{d-1}$};   
 \filldraw(-3,1)circle(2pt);\node at(-3,1.3){$h_{d-1}$};\filldraw(-2.4,1)circle(2pt);\node at(-2.4,.7){$f_{d-1}$};\filldraw(-1.8,1)circle(2pt);\node at(-1.8,1.3){$g_{d}$};       
  \filldraw(-1.2,1)circle(2pt);\node at(-1.2,1.3){$h_{d}$};
   \filldraw(-.6,1)circle(2pt);\node at(-.6,.7){$f_{d}$};
    \filldraw(0,1)circle(2pt);\node at(0,1.3){$g_1$};
   \filldraw(.6,1)circle(2pt);\node at(.6,.7){$f_{1}$};    
  \filldraw(1.2,1)circle(2pt);\node at(1.2,1.3){$g_{2}$};    
   \filldraw(1.8,1)circle(2pt);\node at(1.8,1.3){$h_{2}$};
 \filldraw(2.4,1)circle(2pt);\node at(2.4,.7){$f_{2}$};  
 \filldraw(3,1)circle(2pt);\node at(3,1.3){$g_{3}$};   
      \filldraw(3.6,1)circle(2pt);\node at(3.6,1.3){$h_{3}$};   
   \filldraw(4.2,1)circle(2pt);\node at(4.2,.7){$f_{3}$};  
   \filldraw(4.8,1)circle(2pt);\node at(4.8,1.3){$g_{4}$};  
 
  \filldraw(-4.8,2)circle(2pt);\node at(-4.8,2.3){$f^{q-1}$};  
 \filldraw(2.4,2)circle(2pt);\node at(2.4,2.3){$f^{0}$};  
  \filldraw(-1.5,2)circle(2pt);\node at(-1.5,2.3){$f^{q}$};  
\draw (-2.4,1)--(-1.5,2)--(-.6,1);
\draw (2.4,2)--(2.4,1);  
\draw (.6,1)--(2.4,2)--(4.2,1);
 \draw (-4.8,2)--(-4.2,1);  
\draw (-5.3,1.5)--(-4.8,2)--(-5.6,1.6);

\draw (4.8,1)--(4.8,.5);\draw (3.6,1)--(3.6,.5);
\draw (3,1)--(3,.5);\draw (1.8,1)--(1.8,.5);
\draw (1.2,1)--(1.2,.5);
\draw (0,1)--(0,.5);
\draw (-4.8,1)--(-4.8,.5);\draw (-3.6,1)--(-3.6,.5);
\draw (-3,1)--(-3,.5);\draw (-1.8,1)--(-1.8,.5);
\draw (-1.2,1)--(-1.2,.5);

\node at (0,0){$q\ge2, r=2$};
  \end{tikzpicture}
  \end{center}
  \caption{ $D'$}
  \label{fig7}
  \end{figure}

  Now similarly as in the previous proof, when $f_q$ is of degree zero,  the boundary is
   $f^0,f_3,g_4,h_4,f_4,f^1,\dots,f^q,f_d,g_1,f_1$ 
   and we have a 2-connected, 3-vertex regular 
    nearly Platonic graph with one face of size at least 8 and all other faces of size 5, and such a graph does not exist by Theorem~\ref{thm:no-NPG}.  When $f^q$ is of degree one, by omitting $f^q$, the boundary is $f^0,f_3,g_4,h_4,f_4,f^1,\dots,g_d,h_d,f_d,g_1,f_1$ and $\deg(f_d)=2$, then the outer boundary is of length at least 7, and when $f^q$ is of degree two, the boundary is $f^0,f_3,g_4,h_4,f_4,f^1,\dots,f^q,f_d,g_1,f_1$ and $\deg(f^q)=2$, then the outer boundary is of length at least 9 (see Figure \ref{fig7}). In the both cases, we have a $(5,4,d')$-endblock $B(5,4)$, which does not exist by Lemma~\ref{lem:no-3-endblock-4}.
\end{proof}

\section{Main result}

Now we are ready to prove our main result.

\begin{theorem}\label{thm:no-endblock}
  There is no $(k,d
  _1,d)$-endblock for any admissible triple $(k,d_1,d)$.
\end{theorem}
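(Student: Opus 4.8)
The plan is to show that Theorem~\ref{thm:no-endblock} is simply the aggregate of the case-by-case lemmas already established for each admissible triple, together with the observation that the set of admissible triples is finite and small. First I would recall why the list of admissible $(k,d_1)$ pairs is exactly $(3,3),(3,4),(3,5),(4,3),(5,3)$: since a $(k,d_1,d)$-endblock contains $n-1$ vertices of degree $k\ge 3$ and its inner faces all have degree $d_1\ge 3$, a standard Euler-type count (as carried out in Lemma~\ref{lem:t-values}) forces $2k+2d_1-kd_1>0$, i.e. $(k-2)(d_1-2)<4$, which is precisely the Platonic inequality; combined with $k\ge 3$, $d_1\ge 3$ this leaves only those five pairs. (One should also note $l$ is constrained by $1<l<k$, so $l=2$ when $k=3,4$ and $l\in\{2,3,4\}$ when $k=5$, and that $d\ge 3$ since $d\neq d_1$ and a face of a 2-connected planar graph has degree at least $3$.)

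Next I would simply invoke the lemmas: Lemma~\ref{lem:no-3-endblock-3} kills $(3,3,d)$ for every $d$; Lemma~\ref{lem:no-3-endblock-4} kills $(3,4,d)$; Lemma~\ref{lem:no-3-endblock-5} kills $(3,5,d)$; Lemma~\ref{lem:no-4-endblock} kills $(4,3,d)$ for every admissible $l$ and $d$; and Lemma~\ref{lem:no-5-endblock} kills $(5,3,d)$ for every admissible $l$ and $d$. Since every admissible triple falls under one of these five lemmas, no $(k,d_1,d)$-endblock exists. I would phrase the proof in two sentences: reduce to the five pairs via the Euler count, then cite the five lemmas.

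I do not expect a genuine obstacle here, since all the real work has been done in the preceding sections; the only thing that requires a moment's care is making the reduction to the finite list of $(k,d_1)$ pairs airtight — in particular confirming that the degenerate solutions of the Euler identity (where $n$ or $t$ comes out non-integral or negative) are already handled inside the individual lemmas rather than needing separate treatment here, which they are (e.g. Lemma~\ref{lem:no-3-endblock-3} explicitly discards the non-integer values of $t$). A secondary point worth a sentence is to make sure the statement's phrase ``admissible triple'' is defined or at least understood to mean a triple for which $k\ge 3$ is the regularity degree, $d_1\ge 3$ is the common face degree, $d\ge 3$, $d\neq d_1$, and $1<l<k$ for the exceptional vertex; with that reading the theorem is exactly the conjunction of Lemmas~\ref{lem:no-3-endblock-3}--\ref{lem:no-5-endblock}.

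Finally, I would close by remarking (or leaving to the next section) that Theorem~\ref{thm:no-endblock} immediately yields Theorem~\ref{thm:main-conn=1}: a regular planar graph with connectivity one and a single exceptional face would, by considering a leaf block in its block-cut tree, contain an endblock whose unique cut-vertex has degree $1<l<k$ inside that block, all other vertices of degree $k$, and all faces but the one containing the cut-vertex of degree $d_1$ — that is, a $(k,d_1,d)$-endblock — contradicting Theorem~\ref{thm:no-endblock}.
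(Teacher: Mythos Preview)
Your proposal is correct and follows essentially the same approach as the paper: the paper's proof is a single sentence citing Lemmas~\ref{lem:no-3-endblock-3}--\ref{lem:no-3-endblock-5}, \ref{lem:no-4-endblock}, and \ref{lem:no-5-endblock}. Your added explanation of why only the five Platonic pairs $(k,d_1)$ are admissible is a helpful elaboration but not strictly needed, and your closing remark about deducing Theorem~\ref{thm:main-conn=1} is exactly what the paper does in the subsequent theorem.
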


\begin{proof}
  Follows directly from Lemmas~\ref{lem:no-3-endblock-3}--\ref{lem:no-3-endblock-5},~\ref{lem:no-4-endblock}, and~\ref{lem:no-5-endblock}.
\end{proof}

An alternative proof of the result presented by Deza and Dutour Sikiri\v c in~\cite{DezSik} now follows immediately.

\begin{theorem}
  There is no finite, planar, regular graph with connectivity one that has all but one face of one degree and a single face of a different degree.
\end{theorem}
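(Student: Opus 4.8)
The plan is to derive the final theorem as an immediate corollary of Theorem~\ref{thm:no-endblock}, so the real work has already been localized into the endblock-nonexistence statement. First I would argue by contradiction: suppose $G$ is a finite, planar, $k$-regular graph with connectivity exactly one, all faces but one of degree $d_1$, and a single exceptional face of degree $d \neq d_1$. Since $G$ has a cut vertex, its block-cut-tree is nontrivial, so $G$ has at least two endblocks (leaves of the block-cut-tree), each containing exactly one cut vertex of $G$. I would fix a plane embedding of $G$ and then locate an endblock $B$ that does \emph{not} contain the exceptional face in its interior; such an endblock exists because at most one block can contain the exceptional face, while there are at least two endblocks.

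The next step is to verify that this chosen endblock $B$, together with the way it sits inside the embedding of $G$, is a $(k,d_1,d')$-endblock in the sense of Definition~\ref{def:block} for a suitable $d'$. The key observations are: (i) $B$ is $2$-connected and planar by definition of a block; (ii) every vertex of $B$ other than the cut vertex $x_1$ has all its $G$-edges inside $B$, hence has degree exactly $k$ in $B$; (iii) the cut vertex $x_1$ has degree $l$ in $B$ with $1 < l < k$, since it must use at least two edges inside $B$ (as $B$ is $2$-connected, $x_1$ is not a cut vertex of $B$) and at least one edge leaving $B$ (to the rest of $G$), so $2 \le l \le k-1$; (iv) every face of $B$ except the one ``facing'' the rest of $G$ is also a face of $G$ and hence has degree $d_1$ (here we use that the exceptional face of $G$ is not inside $B$); and (v) the one remaining face of $B$ — the face of $B$ whose interior (in the embedding of $B$ alone) contains the rest of $G$ — has some degree $d'$, and since $x_1$ lies on its boundary and has degree $l < k$ there, Observation~\ref{obs:strong}-style parity/degree bookkeeping together with $l \ne k$ forces $d' \ne d_1$. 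Thus $B$ is a genuine $(k,d_1,d')$-endblock, contradicting Theorem~\ref{thm:no-endblock}.

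The only subtlety — and the step I expect to require the most care — is point (v): ruling out the possibility that the exceptional face of $B$ has degree $d_1$ after all, i.e.\ that $B$ is actually a $2$-connected near-Platonic-type graph in its own right with \emph{all} faces of degree $d_1$. But this cannot happen: if every face of $B$ had degree $d_1$ while $B$ has one vertex of degree $l<k$ and all others of degree $k$, then double counting gives $2|E(B)| = k(|V(B)|-1)+l$ from the vertex side and $2|E(B)| = d_1 f(B)$ from the face side, and feeding these into Euler's formula (exactly the computation in Lemma~\ref{lem:t-values}) yields a non-integer or otherwise impossible value, since the ``defect'' $k - l \ge 1$ has nowhere to go — a $k$-regular-except-one-vertex graph cannot be face-regular. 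Hence the exceptional face of $B$ genuinely has degree $d' \ne d_1$, completing the reduction.

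Once this reduction is in place the final theorem follows with no further computation: the existence of any such $G$ would produce a $(k,d_1,d')$-endblock, which Theorem~\ref{thm:no-endblock} forbids for every admissible triple. I would present the argument in this order — block-cut-tree and existence of a ``free'' endblock first, then the verification of the five defining properties of Definition~\ref{def:block}, then invocation of Theorem~\ref{thm:no-endblock} — keeping the Euler-formula sanity check for property (v) as the one place that warrants an explicit line or two.
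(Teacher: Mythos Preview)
Your reduction to Theorem~\ref{thm:no-endblock} via the block--cut-tree is exactly the paper's approach; the paper's own proof is in fact a two-sentence version of what you outline, simply asserting that such a $G$ ``would have to contain a $(k,d_1,d)$-endblock'' and invoking Theorem~\ref{thm:no-endblock}.

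There is, however, a gap in your step~(v). You claim that if every face of $B$ had degree $d_1$ then Euler's formula (``exactly the computation in Lemma~\ref{lem:t-values}'') would force a non-integer or otherwise impossible value. This is not so: for $(k,d_1,l)=(3,4,2)$ the system $2m=k(n-1)+l$, $2m=d_1 f$, $n-m+f=2$ is perfectly consistent at $(n,m,f)=(7,10,5)$, and one likewise obtains integral solutions for $(4,3,2)$, $(3,5,2)$ and all three $(5,3,l)$ cases. So pure counting does not rule out $d'=d_1$.

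The conclusion you want is nevertheless true, but it needs a structural argument rather than arithmetic. One clean route is to dualize: if every face of $B$ had degree $d_1$, then $B^D$ would be $d_1$-regular with every face of degree $k$ except the single face dual to $x_1$, which has degree $l\ne k$ --- precisely a $2$-connected nearly Platonic configuration, forbidden by Theorem~\ref{thm:no-NPG} (with a little care when $l=2$, since then $B^D$ picks up a double edge). Alternatively, the finitely many numerically admissible tuples $(k,d_1,l,n)$ can be eliminated by hand. Once (v) is repaired along one of these lines your argument is complete --- and in fact more thorough on this point than the paper's own proof, which does not verify $d'\ne d_1$ explicitly.
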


\begin{proof}
  It is well known that every graph with connectivity one and minimum degree at least three has at least two endblocks, that is, 2-connected graphs with minimum degree at least two. If there was a graph defined in the Theorem, 
  it would have to contain a $(k,d_1,d)$-endblock for some admissible triple $(k,d_1,d)$. However, such endblock does not exist by Theorem~\ref{thm:no-endblock}. This proves the claim.
\end{proof}

\end{document}